
\documentclass[preprint,12pt]{elsarticle}




\usepackage{watermark,graphicx,amssymb,amstext,amsmath,amsthm,enumerate,verbatim,tikz,nomencl}





\journal{Linear algebra and its applications}

\DeclareMathOperator{\sgn}{sgn}
\DeclareMathOperator{\supp}{supp}
\DeclareMathOperator{\id}{Id}
\DeclareMathOperator{\ad}{ad}

\newcommand\C{\mathbb{C}}

\newcommand\Z{\mathbb{Z}}
\newcommand\N{\mathbb{N}}

\newcommand\uq{U^{(q)}}
\newcommand\uf{U^{(f)}}
\DeclareMathOperator{\re}{Re}

\DeclareMathOperator{\Ind}{Ind_U^{U^{(u)}}}
\DeclareMathOperator{\Res}{Res_U^{U^{(u)}}}

\newcommand\ie{\textsl{i.e. }}

\newtheorem{mythm}{Theorem}
\newtheorem{mylem}[mythm]{Lemma}
\newtheorem{mycor}[mythm]{Corollary}
\newtheorem{myprop}[mythm]{Proposition}
\newdefinition{rmk}{Remark}
\newdefinition{mydef}{Definition}
\newproof{myproof}{Proof}
\newproof{myproofof}{Proof of Theorem \ref{mainresult}}

\begin{document}

\begin{frontmatter}



\title{Classification of simple weight modules with finite-dimensional weight spaces over the Schr\"odinger algebra}


\author{Brendan Dubsky}

\address{Department of Mathematics, Uppsala University, Box 480, 75106 Uppsala, Sweden}

\begin{abstract}
We classify simple weight modules with finite-dimensional weight spaces over the 
(centrally extended complex) Schr\"odinger algebra in $(1+1)$-dimensional space-time.
Our arguments use the description of lowest weight modules by Dobrev, Doebner and Mrugalla;
Mathieu's twisting functors and results of Wu and Zhu on
dimensions of weight spaces in dense modules.
\end{abstract}

\begin{keyword}
weight module \sep Schr\"odinger algebra
\MSC 17B10 \sep 17B81
\end{keyword}

\end{frontmatter}


\section{Introduction and preliminaries}
\label{s0}
The Schr\"odinger Lie group is the group of symmetries of the free particle Schr\"odinger equation. The (centrally extended) Lie algebra $\mathcal{S}$ of this group in the case of $(1+1)$-dimensional space-time is called the {\em Schr\"odinger algebra}, see \cite{Do97,Wu13}. The algebra $\mathcal{S}$ has basis $\{ f,q,h,z,p,e\}$ and 
the Lie bracket is given as follows:
\begin{equation}
\label{commrelations}
\begin{array}{lll}
\left[h,e\right]=2e, &  \left[h,p\right]=p,& \left[h,f\right]=-2f,\\ 
\left[e,q\right]=p, &  \left[e,p\right]=0, & \left[e,f\right]=h, \\
\left[p,f\right]=-q, &\left[f,q\right]=0, &\left[h,q\right]=-q,\\
\left[p,q\right]=z, &\left[z,\mathcal{S}\right]=0.
\end{array}
\end{equation}
From this we see that $e$, $f$ and $h$ generate an $\mathfrak{sl}_2$-subalgebra  of $\mathcal{S}$.
\par
In this paper we study so-called \emph{weight modules} over $\mathcal{S}$, that is $\mathcal{S}$-modules 
which are diagonalizable over the \emph{Cartan subalgebra} $\mathfrak{h}$ of $\mathcal{S}$ spanned by $h$ and $z$. 
Put differently, these are modules which may be written as a sum of simultaneous eigenspaces for $h$ and $z$, 
so-called \emph{weight spaces}. The goal of this paper is to classify, up to isomorphism, 
all simple weight $\mathcal{S}$-modules with finite-dimensional weight spaces. 
\par
As usual, $\mathcal{S}$-modules are the same as modules over the universal enveloping algebra $U$ of $\mathcal{S}$. 
By the Poincar\'e-Birkhoff-Witt theorem (see, e.g., \cite[p. 156-160]{Ja79}), $U$ is a unital and associative algebra 
with basis 
\begin{displaymath}
\{q^{i_1}f^{i_2}p^{i_3}e^{i_4}h^{i_5}z^{i_6}\}_{i_1,\dots,i_6\in\N},
\end{displaymath}
where $\N$ denotes the set of nonnegative integers. 
\par
Schur's lemma  says that the central element $z$ acts as a scalar on each simple $\mathcal{S}$-module $V$,
which is called the {\em central change} of $V$. Hence the weight spaces of a simple module are, in fact, 
precisely the eigenspaces of $h$. For a module $V$ on which $z$ acts as a scalar we denote by $\supp(V)$ 
the set of $h$-eigenvalues on $M$ and we will call these eigenvalues \emph{weights}. All $h$-eigenvectors 
will be called \emph{weight vectors}. For a weight $\lambda$ we denote by $V_{\lambda}$ the corresponding
weight space.
\par
The weights of a weight module $V$ are weakly ordered as follows: $\lambda_1\le \lambda_2$ if and only if $\lambda_2-\lambda_1$ is a nonnegative integer. If there is a maximal element, $\lambda$, in $\supp(V)$, then $\lambda$ is called a \emph{highest weight} of $V$, and any $v\in V_{\lambda}$ is called a \emph{highest weight vector}. If $V$ is generated by a highest weight vector, then it is called a \emph{highest weight module}. Lowest weights, lowest weight vectors and lowest weight modules are defined analogously. We have the following standard fact
(cf. \cite[Lemma~1.15]{Ma10}).
\par
\begin{mylem}
\label{weightspactions}
Let $V$ be a weight $U$-module with $\lambda\in\supp(V)$. Then the following hold:
\begin{displaymath}
\begin{array}{lll}
f:V_{\lambda}\rightarrow V_{\lambda-2},&
q:V_{\lambda}\rightarrow V_{\lambda-1},&
h:V_{\lambda}\rightarrow V_{\lambda},\\
z:V_{\lambda}\rightarrow V_{\lambda},&
p:V_{\lambda}\rightarrow V_{\lambda+1},&
e:V_{\lambda}\rightarrow V_{\lambda+2}.
\end{array}
\end{displaymath}
\end{mylem}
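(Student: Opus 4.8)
The plan is to dispatch all six cases by a single uniform computation driven by the commutation relations \eqref{commrelations}. The key point is that each basis element of $\mathcal{S}$ is an eigenvector for the adjoint action of $h$: reading off \eqref{commrelations} we have $[h,f]=-2f$, $[h,q]=-q$, $[h,h]=0$, $[h,z]=0$ (since $z$ is central), $[h,p]=p$ and $[h,e]=2e$. So in each case $x\in\{f,q,h,z,p,e\}$ satisfies $[h,x]=\mu x$ for the appropriate integer $\mu\in\{-2,-1,0,0,1,2\}$.

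Next I would fix $\lambda\in\supp(V)$ and a weight vector $v\in V_\lambda$, so $h\cdot v=\lambda v$, and compute, using associativity of $U$ and the identity $hx=[h,x]+xh$ in $U$,
\begin{displaymath}
h\cdot(x\cdot v)=(hx)\cdot v=\bigl([h,x]+xh\bigr)\cdot v=\mu(x\cdot v)+\lambda(x\cdot v)=(\lambda+\mu)(x\cdot v).
\end{displaymath}
Hence $x\cdot v\in V_{\lambda+\mu}$. Substituting the six values of $\mu$ listed above yields exactly the six displayed maps in the statement; the cases of $h$ and $z$ are in any case immediate from the definition of a weight space, and the fact that $z$ acts within $V_\lambda$ uses only that $z$ is central.

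I do not expect any genuine obstacle here: the argument is purely formal and uses nothing beyond the Lie bracket relations and the definition of a weight module. The only small points to be careful about are that the computation takes place inside the associative algebra $U$ (so that $hx$ makes sense and equals $[h,x]+xh$), and that a priori $x\cdot v$ could be zero, in which case the asserted containment $x\cdot v\in V_{\lambda+\mu}$ holds trivially. This matches the standard argument referenced in \cite[Lemma~1.15]{Ma10}.
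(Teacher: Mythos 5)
Your argument is correct and is exactly the standard computation (for a weight vector $v$ and an $\ad_h$-eigenvector $x$ with $[h,x]=\mu x$, one has $h(xv)=(\lambda+\mu)xv$, with centrality of $z$ preserving the $z$-eigenvalue) that the paper itself does not spell out but simply cites as a standard fact from \cite[Lemma~1.15]{Ma10}. No issues.
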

\par
Those $\mathcal{S}$-modules on which $p$ and $q$ (and therefore also $z$) act as $0$ are $\mathfrak{sl}_2$-modules, and vice versa, so a classification of these modules is well-known, see, e.g., \cite[p. 72]{Ma10}. 
\par
In the next section we recall known partial results from \cite{Do97,Wu13} on classification of simple weight $U$-modules with finite-dimensional weight spaces. In Section~\ref{s3} we formulate our main result, Theorem~\ref{mainresult}.
In Section~\ref{s35} we discuss our main tool, namely Mathieu's twisting functors from \cite{Ma00}. Finally, Theorem~\ref{mainresult} is proved in Section~\ref{s4}.

\section{Known partial results}\label{s2}

A classification and explicit description of all simple lowest weight $\mathcal{S}$-modules (on which either $p$ or $q$, or both, act nonzero) was given by Dobrev et al. in \cite{Do97}. Classification and explicit description of 
simple highest weight $\mathcal{S}$-modules follows easily. As usual, simple highest weight modules
are unique simple quotients of Verma modules, that is $\mathcal{S}$-modules induced from one-dimensional modules over the subalgebra spanned by $f$, $q$, $h$ and $z$, on which both $f$ and $q$ act as $0$. Dobrev et al.
explicitly compute maximal submodules in Verma modules. Here we recall their result in a slightly different
version which is necessary for our arguments. In the following proposition, as well as the rest of this paper, we for a set, $X$, we denote by $X^*$ the set $X\backslash\{0\}$.
\begin{myprop}
\label{highestclass}
For $\lambda\in\C\backslash (-\frac{1}{2}+\N)$ and $c\in\C^*$, let $M(\lambda,c)$ be the $U$-module 
with basis $\{ v_{i,j}\}_{i,j\in\mathbb{N}}$ on which the action of $U$ is given by
\begin{eqnarray*}
qv_{i,j}&=&v_{i+1,j};\\
fv_{i,j}&=&v_{i,j+1};\\
zv_{i,j}&=&cv_{i,j};\\
hv_{i,j}&=&(\lambda-i-2j)v_{i,j};\\
pv_{i,j}&=&-jv_{i+1,j-1}+civ_{i-1,j};\\
ev_{i,j}&=&j(\lambda+1-i-j)v_{i,j-1}+\frac{1}{2}ci(i-1)v_{i-2,j}.
\end{eqnarray*}
For $\lambda\in -\frac{1}{2}+\N$ and $c\in\C^*$, denote by  $N(\lambda,c)$ the $U$-module 
of $M(\lambda,c)$ with basis $\{ v_{i,j}\}_{i,j\in\mathbb{N}, j\le\lambda+\frac{1}{2}}$ on which the action of $U$ is given by
\begin{equation}
\label{highestclassq}
\begin{array}{rcl}
qv_{i,j}&=&v_{i+1,j};\\
fv_{i,j}&=&
\begin{cases}
v_{i,j+1},&\text{if $j<\lambda+\frac{1}{2}$};\\
-\sum_{s=0}^{\lambda+\frac{1}{2}}{\frac{1}{(2c)^{\lambda+\frac{3}{2}-s}}\binom{\lambda+\frac{3}{2}}{s}v_{i+2\lambda+3-2s,s}},&\text{if $j=\lambda+\frac{1}{2}$};
\end{cases}
\\
zv_{i,j}&=&cv_{i,j};\\
hv_{i,j}&=&(\lambda-i-2j)v_{i,j};\\
pv_{i,j}&=&-jv_{i+1,j-1}+civ_{i-1,j};\\
ev_{i,j}&=&j(\lambda+1-i-j)v_{i,j-1}+\frac{1}{2}ci(i-1)v_{i-2,j}.
\end{array}
\end{equation}
Then every simple highest weight $U$-module with finite-dimensional weight spaces on which either 
$p$ or $q$ or both act nonzero is isomorphic to precisely one module of the form $M(\lambda,c)$ or $N(\lambda,c)$. 
\end{myprop}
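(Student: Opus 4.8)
The plan is to realize every simple highest weight module as the unique simple quotient of a Verma module and then to identify that quotient among $M(\lambda,c)$ and $N(\lambda,c)$. Set $\mathfrak b^+=\spann\{h,z,e,p\}$; this is a Lie subalgebra of $\mathcal S$ (with $[\mathfrak b^+,\mathfrak b^+]=\spann\{e,p\}$), and since $[f,q]=0$ the Poincar\'e-Birkhoff-Witt theorem gives $U\cong U(\spann\{f,q\})\otimes U(\mathfrak b^+)$ with $U(\spann\{f,q\})=\C[f,q]$. Thus for $\lambda,c\in\C$ the Verma module $\Delta(\lambda,c):=U\otimes_{U(\mathfrak b^+)}\C v_0$, where $hv_0=\lambda v_0$, $zv_0=cv_0$ and (necessarily) $ev_0=pv_0=0$, is free of rank one over $\C[f,q]$ with basis $\{q^if^jv_0\}_{i,j\in\N}$; it has a one-dimensional $\lambda$-weight space, hence a unique maximal submodule and a unique simple quotient $L(\lambda,c)$, and any simple highest weight $U$-module is isomorphic to some $L(\lambda,c)$, with $\lambda$ its highest weight and $c$ its central charge. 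Such a module automatically has finite-dimensional weight spaces, since $\dim\Delta(\lambda,c)_{\lambda-n}<\infty$ for every $n$. First I dispose of the central charge: $z=[p,q]$ acts on $L(\lambda,c)$ by $c\,\id$, so if $c\in\C^*$ then neither $p$ nor $q$ acts as $0$; conversely, if $c=0$ then $qv_0$ is killed by $e$ and by $p$ (because $e(qv_0)=pv_0=0$ and $p(qv_0)=zv_0=0$), hence generates a submodule lying below weight $\lambda$, so simplicity forces $qv_0=0$, whence $q$ and then $p$ act as $0$. So the hypothesis is equivalent to $c\in\C^*$, which I now assume.

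For $c\in\C^*$ the formulas displayed for $M(\lambda,c)$ are nothing but the action of $U$ on the PBW basis $v_{i,j}:=q^if^jv_0$ of $\Delta(\lambda,c)$: the actions of $q,f,h,z$ are immediate and those of $p,e$ follow by induction on $i+2j$ by commuting $p$ (resp.\ $e$) past $q^if^j$ via \eqref{commrelations} together with $pv_0=ev_0=0$. So $M(\lambda,c)$, as a $U$-module, \emph{is} $\Delta(\lambda,c)$, and everything reduces to computing the maximal submodule. The key observation is that, the weights of $\Delta(\lambda,c)$ being bounded above by $\lambda$, any nonzero submodule contains a weight vector of maximal weight, which is thus annihilated by $e$ and $p$; such a ``singular'' vector generates a submodule on which $U(\mathfrak b^+)$ acts by scalars, i.e.\ the submodule is $\C[f,q]$ applied to it, so it avoids weight $\lambda$ unless the vector is a multiple of $v_0$. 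Hence $\Delta(\lambda,c)$ is simple iff $v_0$ spans all its singular vectors, and otherwise its maximal submodule is generated by a singular vector of maximal weight strictly below $\lambda$. Finding these singular vectors is elementary linear algebra: writing a vector of weight $\lambda-n$ as $w=\sum_{i+2j=n}a_{i,j}v_{i,j}$ and imposing $pw=ew=0$ produces a linear recursion on the $a_{i,j}$, and solving it shows that a nonzero solution with $n\ge 1$ exists precisely when $\lambda\in-\tfrac12+\N$, that it is then unique up to scalar, lives at weight $-\lambda-3$, and is exactly the relation recorded in \eqref{highestclassq} for $fv_{i,\lambda+\frac12}$. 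Alternatively, all of this can be imported directly from the computation of maximal submodules of lowest weight Verma modules in \cite{Do97} via the involution of $\mathcal S$ sending $e\leftrightarrow f$, $p\leftrightarrow q$, $h\mapsto -h$, $z\mapsto -z$, which interchanges highest and lowest weight modules and negates the central charge.

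Putting it together: for $\lambda\in\C\backslash(-\tfrac12+\N)$ the Verma module $\Delta(\lambda,c)=M(\lambda,c)$ is simple; for $\lambda\in-\tfrac12+\N$ the submodule generated by the singular vector is, after the evident triangular change of basis (order $v_{a,b}$ first by $b$), the span of the $v_{i,j}$ with $j\ge\lambda+\tfrac32$, so $L(\lambda,c)$ has basis the images of $\{v_{i,j}\}_{i\in\N,\,j\le\lambda+\frac12}$ with the $f$-action modified precisely as in \eqref{highestclassq}, i.e.\ $L(\lambda,c)\cong N(\lambda,c)$ (and $N(\lambda,c)$ is simple, by the same singular-vector analysis applied to it, or again by \cite{Do97}). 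Each listed module is then a highest weight module with highest weight $\lambda$, finite-dimensional weight spaces, and $p,q$ acting nonzero; since $\lambda$ and $c$ are isomorphism invariants, and for each admissible pair $(\lambda,c)$ only one of $M(\lambda,c)$, $N(\lambda,c)$ is defined, no two of these modules are isomorphic, which completes the classification. The one genuinely delicate point is the singular-vector recursion --- equivalently, knowing exactly which Verma modules are reducible and the precise shape of their maximal submodules (the content borrowed from \cite{Do97}) --- together with keeping the normalization of \eqref{highestclassq} straight when passing through the involution; the rest is bookkeeping.
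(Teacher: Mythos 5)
Your proposal is correct in substance, but it is worth being clear about where it genuinely differs from the paper and where it quietly relies on the same input. The paper's own proof is a pure transport argument: it exhibits an explicit isomorphism $\psi:\mathcal{S}(1)\to\mathcal{S}$ (with $D\mapsto -h$, $P_t\mapsto -e$, $P_x\mapsto p$, $K\mapsto f$, $G\mapsto -q$, $m\mapsto -z$) and pushes the lowest weight classification of \cite[Theorem~1]{Do97}, including the explicit maximal submodules, forward along $\psi$; it never works with Verma modules for $\mathcal{S}$ directly. You instead set up the highest weight theory intrinsically: the decomposition $U\cong\C[f,q]\otimes U(\mathfrak{b}^+)$ with $\mathfrak{b}^+=\spann\{h,z,e,p\}$, the identification of $M(\lambda,c)$ with the Verma module $\Delta(\lambda,c)$, the clean observation (left implicit in the paper) that the hypothesis ``$p$ or $q$ acts nonzero'' is exactly $c\ne 0$, and the reduction of the whole classification to singular vectors; this buys a self-contained framework and makes the ``precisely one'' part transparent. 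However, the decisive content --- that for $c\ne 0$ a singular vector below $v_0$ exists iff $\lambda\in-\frac12+\N$, is unique up to scalar, sits at weight $-\lambda-3$, and produces exactly the relation in \eqref{highestclassq}, together with the simplicity of the quotient $N(\lambda,c)$ (where ``the same singular-vector analysis'' is really a separate computation, since a vector of the quotient killed by $e$ and $p$ need not lift to a singular vector of $\Delta(\lambda,c)$) --- is asserted rather than carried out, and your stated fallback is to import it from \cite{Do97} via the involution $e\leftrightarrow f$, $p\leftrightarrow q$, $h\mapsto -h$, $z\mapsto -z$, which is the paper's argument in slightly different clothing (and where, as you note, the normalization of the $f$-action on the top layer must be rechecked). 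So either carry out the recursion and the simplicity check to make the direct route stand on its own, or present the transport from \cite{Do97} as the actual proof; as written, your primary route is a sound plan whose key computation is outsourced to the same source the paper cites.
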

\begin{myproof}
This result follows easily from \cite[Theorem~1]{Do97}. Consider the Lie algebra $\mathcal{S}(1)$ defined in \cite{Do97}. It is readily checked that the map $\psi:\mathcal{S}(1)\rightarrow \mathcal{S}$ given by
\begin{displaymath}
\begin{aligned}
D\mapsto -h,\quad
P_t\mapsto -e,\quad
P_x\mapsto p,\quad
K\mapsto f,\quad
G\mapsto -q,\quad
m\mapsto -z
\end{aligned}
\end{displaymath}
extends to a Lie algebra isomorphism (here $m$ is identified with the scalar with which it acts on the modules in question). Now the statement follows by pushing forward \cite[Theorem~1]{Do97}  along $\psi$.
\qed
\end{myproof}

The simple weight $U$-modules with finite-dimensional weight spaces which now remain to be classified are those on which either $p$ or $q$ or both act nonzero and, furthermore, which have neither a highest nor a lowest weight. Let us denote the class of such modules by $\mathcal{N}$. 
\par
Theorem~\ref{chinesethm} below is due to Wu and Zhu, see \cite{Wu13}. It describes dimensions of weight space 
for modules in $\mathcal{N}$. The proof, though omitted here, requires the following lemma, which we will use.
\begin{mylem}
\label{localnil}
\begin{enumerate}[$($a$)$]
\item\label{localnil.1}
For $s\in \{p,q,e,f\}$ the adjoint action of $s$ on $U$ is locally nilpotent. 
\item\label{localnil.2}
Let $V$ be a $U$-module, $v\in V$, $u,s\in U$ and $n,m\in\N$. Assume that $\ad_s^nu=0$. If $s^{m}v=0$, then 
$s^{nm}uv=0$. 
\item\label{localnil.3}
Let $s\in \{p,q,e,f\}$ and $V$ be a simple $U$-module. If the kernel of $s$ on $V$ is nonzero, then
$s$ acts on $V$ locally nilpotently.
\end{enumerate}
\end{mylem}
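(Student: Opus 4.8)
The plan is to prove the three parts in the stated order, each feeding into the next.

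\emph{Part (a).} I would first observe that each $s\in\{p,q,e,f\}$ is a weight vector for $\ad_h$, and then check directly from \eqref{commrelations} that $\ad_s$ is \emph{nilpotent} on each of the six basis vectors of $\mathcal{S}$. For instance $\ad_p$ annihilates $p$, $e$ and $z$, while $\ad_p(h)=-p$, $\ad_p(q)=z$, and $\ad_p(f)=-q$, so that $\ad_p^{3}$ kills all of $\mathcal{S}$; the cases of $q$, $e$, $f$ are entirely analogous. Since $\ad_s$ is a derivation of $U$, and since the set $\{x\in U:\ad_s^{N}x=0\text{ for some }N\in\N\}$ is closed under sums and scalars as well as under products (via the Leibniz expansion $\ad_s^{N}(ab)=\sum_{k=0}^{N}\binom{N}{k}\ad_s^{k}(a)\,\ad_s^{N-k}(b)$), this set is a subalgebra of $U$; as it contains $\mathcal{S}$, it is all of $U$.

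\emph{Part (b).} The key tool is the identity $s^{N}u=\sum_{k=0}^{N}\binom{N}{k}(\ad_s^{k}u)\,s^{N-k}$, valid for any $s,u\in U$ and $N\in\N$, which follows by induction on $N$ from $su=us+\ad_s(u)$. Applying it with $N=nm$ and then acting on $v$, I would split the sum: the terms with $k\ge n$ vanish since $\ad_s^{n}u=0$ forces $\ad_s^{k}u=0$, while for the terms with $k\le n-1$ one has $nm-k\ge n(m-1)+1\ge m$ (assuming $n,m\ge1$; the cases $n=0$ or $m=0$ are immediate), so $s^{nm-k}v=0$. Hence every term is zero and $s^{nm}uv=0$.

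\emph{Part (c).} Fix a nonzero $v\in V$ with $sv=0$. Given an arbitrary $w\in V$, simplicity of $V$ gives $u\in U$ with $w=uv$; part (a) provides $n\in\N$ with $\ad_s^{n}u=0$, and part (b) applied with $m=1$ then yields $s^{n}w=s^{n}uv=0$. Thus $s$ acts locally nilpotently on $V$.

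None of these steps presents a genuine obstacle; the only places requiring a little care are the exhaustive (but routine) verification that $\ad_p,\ad_q,\ad_e,\ad_f$ are nilpotent on $\mathcal{S}$ in part (a), and the bookkeeping of the inequality $nm-k\ge m$ for $k\le n-1$ in part (b), which is precisely what makes the explicit bound $s^{nm}$ work.
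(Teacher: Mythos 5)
Your proof is correct, but it follows a genuinely different route from the paper in part (a), and a somewhat different execution of (b). For (a), the paper never checks nilpotency of $\ad_s$ on the generators: it takes the finite-dimensional spaces $U_m$ spanned by PBW monomials of total degree at most $m$, notes they are stable under $\ad_s$ and under the diagonalizable action of $\ad_h$ with only finitely many eigenvalues, and concludes from the weight-shift rule of Lemma~\ref{weightspactions} that $\ad_s$ must be nilpotent on each $U_m$. Your argument instead verifies directly from \eqref{commrelations} that $\ad_s$ is nilpotent on $\mathcal{S}$ and then invokes the standard fact, via the Leibniz expansion $\ad_s^{N}(ab)=\sum_k\binom{N}{k}\ad_s^{k}(a)\ad_s^{N-k}(b)$, that the locally $\ad_s$-nilpotent elements form a subalgebra containing $\mathcal{S}$, hence all of $U$. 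The paper's argument is uniform (no case-checking, and it would apply verbatim to any element shifting $h$-weights by a nonzero amount), while yours is more elementary and does not rely on the $h$-grading at all. For (b), you use the explicit identity $s^{N}u=\sum_{k=0}^{N}\binom{N}{k}(\ad_s^{k}u)s^{N-k}$ with $N=nm$ and the inequality $nm-k\ge m$ for $k\le n-1$ (equivalently $(n-1)(m-1)\ge 0$), whereas the paper argues structurally by induction with the span $X$ of the elements $s^ius^j$, rewriting $s^nx=x's$ and iterating to get $s^{nm}u=u's^{m}$; both give the same bound $s^{nm}uv=0$, your version being more computational and the paper's more compact. Part (c) is essentially identical in both: simplicity gives $w=uv$ for a kernel vector $v$, and (a) plus (b) with $m=1$ yield $s^{n}w=0$.
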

\begin{myproof}
For $m\in\N$ let $U_m$ denote the linear span in $U$ 
of all standard monomials $q^{i_1}f^{i_2}p^{i_3}e^{i_4}h^{i_5}z^{i_6}$
such that $i_1+\dots+i_6\leq m$. Then  $U_m$ is finite-dimensional and stable under the adjoint action of
$s$. Moreover, $U_m$ is also stable under the adjoint action of $h$. At the same time, the adjoint action of
$h$ on $U_m$ is certainly diagonalizable (as all monomials are eigenvectors for this action). 
Since $U_m$ is finite-dimensional, the adjoint action of $h$ on $U_m$ has only finitely
many eigenvalues. Now claim \eqref{localnil.1} follows from Lemma~\ref{weightspactions}.

To prove claim \eqref{localnil.2}, let $X$ denote the linear subspace of $U$ spanned by all elements
of the form $s^ius^j$, $i,j\in\N$. The identity $\ad_s^nu=0$ can be written as $s^nu=u's$ 
for some $u'\in X$ which implies
that for any $x\in X$ there is $x'\in X$ such that $s^nx=x's$. By induction, it follows that 
for any $x\in X$ there is $x'\in X$ such that $s^{nm}x=x's^m$. Therefore there is $u'\in X$ such that 
$s^{nm}uv=u's^m v=0$, proving claim \eqref{localnil.2}.

As a simple module is generated by any nonzero element, claim \eqref{localnil.3} follows from claims \eqref{localnil.1} and \eqref{localnil.2}.
\qed
\end{myproof}
\begin{mythm}
\label{chinesethm}
Let $L\in\mathcal{N}$ and $\lambda\in\supp(L)$. Then $\supp(L)=\lambda+\mathbb{Z}$ and $\dim(L_{\lambda+i})=\dim(L_{\lambda+j})$ for all $i,j\in\mathbb{Z}$. 
\end{mythm}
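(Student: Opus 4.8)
The plan is to establish the two assertions separately, first the connectedness of the support and then the equidimensionality of the weight spaces. For the first part, I would argue by contradiction: suppose $\supp(L)$ is not all of $\lambda+\Z$. Since $L\in\mathcal{N}$ has neither a highest nor a lowest weight, $\supp(L)$ cannot have a maximal or minimal element, so if it is a proper subset of $\lambda+\Z$ there must be a ``gap'', i.e.\ weights $\mu,\mu+k\in\supp(L)$ with $k\geq 2$ and $\mu+1,\dots,\mu+k-1\notin\supp(L)$. I would then exploit that at least one of $p,q$ acts nonzero on $L$ to derive a contradiction. If $q$ acts nonzero, then by simplicity $q$ is injective on $L$ (if $\ker q\neq 0$, then by Lemma~\ref{localnil}\eqref{localnil.3} $q$ acts locally nilpotently, and combined with a weight-space argument one shows $L$ would have a highest weight, contradicting $L\in\mathcal{N}$); but $q:L_{\mu+1}\to L_\mu$ forces $L_{\mu+1}\neq 0$ if $L_\mu\neq 0$ — wait, the direction is wrong, so instead I use that $q:L_{\mu+k}\to L_{\mu+k-1}$ being injective forces $L_{\mu+k-1}\neq 0$, contradicting the gap. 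Symmetrically, if $q$ acts nonzero one can also run the argument upward, and the case where only $p$ acts nonzero is handled the same way since $p:L_\nu\to L_{\nu+1}$ and injectivity of $p$ (again from Lemma~\ref{localnil}\eqref{localnil.3} together with the no-lowest-weight hypothesis) propagates nonvanishing upward. This shows $\supp(L)=\lambda+\Z$.

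For the equidimensionality, having established that $\supp(L)=\lambda+\Z$, I would show that at least one of $p,q$ acts \emph{bijectively} between consecutive (or every-other) weight spaces, which immediately gives $\dim L_{\lambda+i}\leq\dim L_{\lambda+j}$ in one direction, and then obtain the reverse inequality by a symmetric argument. Concretely: if $q$ acts nonzero, it is injective on all of $L$ (as above), so $q:L_{\lambda+i+1}\to L_{\lambda+i}$ is injective, giving $\dim L_{\lambda+i+1}\leq\dim L_{\lambda+i}$ for all $i$. For the opposite inequality I would consider $e$ or $p$: one similarly shows that the kernel of $p$ (respectively $e$) cannot be nonzero without forcing a lowest (respectively highest) weight, so $p$ is injective, giving $\dim L_{\lambda+i}\leq\dim L_{\lambda+i+1}$, and combining yields $\dim L_{\lambda+i}=\dim L_{\lambda+i+1}$ for every $i$, hence equality across all of $\Z$. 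If instead only $p$ acts nonzero (with $q$ acting as zero), then $p$ injective gives one inequality, and one needs $f$ or another generator lowering the weight to be injective for the other; here the relevant point is that $[e,f]=h$ generates an $\mathfrak{sl}_2$ and one can use finite-dimensionality of weight spaces together with the $\mathfrak{sl}_2$-theory to control $\dim L_{\lambda+i}$ versus $\dim L_{\lambda+i+2}$, then combine with the $p$-translation to fill in the odd steps.

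The main obstacle I anticipate is the bookkeeping around which operators are injective: the hypothesis $L\in\mathcal{N}$ only guarantees that $p$ \emph{or} $q$ acts nonzero, not both, so I must be careful to have, in every case, one weight-raising operator and one weight-lowering operator available that are injective. The cleanest route is probably to prove a small auxiliary claim first: \emph{for any simple weight module with no highest weight, every nonzero $s\in\{p,e\}$ acts injectively, and dually for $\{q,f\}$ when there is no lowest weight} — this follows from Lemma~\ref{localnil}\eqref{localnil.3} since a nonzero kernel forces local nilpotency, and a locally nilpotent weight-raising (resp.\ lowering) operator on a weight module with finite-dimensional weight spaces, together with the bracket relations, forces the existence of a highest (resp.\ lowest) weight. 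Note that $f$ always acts nonzero on a module in $\mathcal{N}$: if $f$ acted as zero, then since $[e,f]=h$ and $[p,f]=-q$ we would get constraints forcing the module to essentially be an $\mathfrak{sl}_2\ltimes(\text{Heisenberg})$-type module with $f=0$, and one checks such a simple module either has $p=q=0$ (excluded) or a lowest weight (excluded). Once this injectivity lemma is in place, both assertions of the theorem follow by the two-sided inequality sandwich sketched above, and the only remaining work is the routine verification that a locally nilpotent raising operator on a module with finite-dimensional weight spaces produces a highest weight.
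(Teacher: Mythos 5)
Your overall skeleton—reduce everything to the injectivity of a weight-raising and a weight-lowering operator and then sandwich the dimensions—is indeed the standard strategy (note the paper itself gives no proof of Theorem~\ref{chinesethm}; it is quoted from Wu and Zhu \cite{Wu13}, and only Lemma~\ref{localnil} is proved as an ingredient). Granting the injectivity statements, your deductions are fine: since $[p,f]=-q$ and $[e,q]=p$, the element $p$ acts nonzero on $L$ if and only if $q$ does, so on any $L\in\mathcal{N}$ you would always have an injective raising operator of step one and an injective lowering operator of step one, and both the equality $\supp(L)=\lambda+\Z$ and the constancy of $\dim L_{\lambda+i}$ follow at once.

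The genuine gap is the step you dismiss as a ``routine verification'': that a locally nilpotent weight-raising (resp.\ lowering) operator on a simple weight module with finite-dimensional weight spaces forces a highest (resp.\ lowest) weight. For $\mathfrak{sl}_2$ this is routine via PBW (if $ev=0$ then $L=\mathrm{span}\{f^av\}$ has support bounded above), but for the Schr\"odinger algebra it is not: if $ev=0$, then $L=U v=\mathrm{span}\{q^af^bp^cv\}$, and the factor $p^c$ still produces weights that are arbitrarily large, so no highest weight is obtained; dually, $qv=0$ does not prevent $f$ from producing arbitrarily low weights. Your auxiliary claim is precisely Lemma~\ref{inject} of the paper, and its proof there is not elementary: part (a) invokes Theorem~\ref{chinesethm} itself together with Mathieu's finite-length result \cite[Lemma~3.3]{Ma00}, so quoting it in a proof of Theorem~\ref{chinesethm} would be circular; and part (b), which handles $p$ and $q$, needs the additional hypothesis that $z$ acts by a nonzero scalar, plus a genuinely nontrivial argument (the Kleinecke--Shirokov theorem to rule out local nilpotency of $q$, and the family $q^{2i}e^iv$ to manufacture infinitely many independent vectors of a single weight). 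Moreover, the nonvanishing of $z$ on modules in $\mathcal{N}$ is Corollary~\ref{znonzero}, which in the paper sits downstream of Theorem~\ref{chinesethm} and the twisting-functor machinery, so you cannot assume it either; a complete proof must deal with the possibility $z=0$ separately. In short, the injectivity lemma on which your whole argument rests is the actual content of the theorem, and it is left unproved.
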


\section{Main result}
\label{s3}

The following theorem is the main result of this paper.
\begin{mythm}
\label{mainresult}
For $\lambda\in -\frac{1}{2}+\N$, $c\in\C^*$ and $x\in \C$ with $x\ne 0$ and $0\le\re(x)<1$, let ${B}^{(q)}_x(N(\lambda,c))$ be the 
vector space with basis $\{ v_{i,j}\}_{i,j\in\mathbb{Z}, 0\le j\le\lambda+\frac{1}{2}}$. Then,
setting
\begin{equation}
\label{qaction}
\begin{array}{rcl}
qv_{i,j}&=&v_{i+1,j};\\
fv_{i,j}&=&
\begin{cases}
v_{i,j+1},&\text{if $j<\lambda+\frac{1}{2}$};\\
-\sum_{s=0}^{\lambda+\frac{1}{2}}{\frac{1}{(2c)^{\lambda+\frac{3}{2}-s}}\binom{\lambda+\frac{3}{2}}{s}v_{i+2\lambda+3-2s,s}},&\text{if $j=\lambda+\frac{1}{2}$};
\end{cases}
\\
zv_{i,j}&=&cv_{i,j};\\
hv_{i,j}&=&(\lambda-(i+x)-2j)v_{i,j};\\
pv_{i,j}&=&-jv_{i+1,j-1}+c(i+x)v_{i-1,j};\\
ev_{i,j}&=&j(\lambda+1-(i+x)-j)v_{i,j-1}+\frac{1}{2}c(i+x)((i+x)-1)v_{i-2,j};
\end{array}
\end{equation}
defines on ${B}^{(q)}_x(N(\lambda,c))$ the structure of a $U$-module. Moreover,
every module in $\mathcal{N}$ is isomorphic to precisely one module of this form.
\end{mythm}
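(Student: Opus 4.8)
The plan is to handle the two assertions separately: first that the formulas \eqref{qaction} define a $U$-module structure, and second that every module in $\mathcal{N}$ arises this way, exactly once.

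For the first assertion, the cleanest route is to recognize $B^{(q)}_x(N(\lambda,c))$ as a twist of $N(\lambda,c)$ by Mathieu's twisting functor associated to the locally nilpotent element $q$ (whence the superscript $(q)$). Concretely, one localizes the module $N(\lambda,c)$ — or rather its ``doubly infinite'' analogue in the $i$-index — at the multiplicative set generated by $q$, then applies the automorphism of the localized enveloping algebra that sends $q$ to $q$ but conjugates by the formal power $q^{x}$, i.e. replaces each generator $g$ by $\sum_{k\ge 0}\binom{x}{k}(\ad_q)^k(g)\,q^{-k}$, and finally restricts to the submodule spanned by the $v_{i,j}$ with $i\in\Z$. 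Since $\ad_q$ is locally nilpotent on $U$ by Lemma~\ref{localnil}\eqref{localnil.1}, these sums are finite and the construction makes sense. The module axioms are then automatic from the fact that we have applied an algebra automorphism to a genuine module; alternatively, and this is what I would actually write down to keep the paper self-contained, one simply verifies directly that the operators in \eqref{qaction} satisfy the defining relations \eqref{commrelations} of $\mathcal{S}$ on each basis vector $v_{i,j}$. This is a finite, if tedious, check: each relation becomes a polynomial identity in $i$, $x$, $j$, $\lambda$, $c$, and one notes that for $x\in\N$ the identities already hold because \eqref{qaction} then specializes to a direct sum of shifted copies of the highest-weight formulas \eqref{highestclassq}; since both sides are polynomial in $x$, the identities hold for all $x\in\C$. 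The only relation needing genuine care is the one involving $f$ at the boundary $j=\lambda+\tfrac12$, where the ``wrap-around'' formula enters; but this too reduces, by the same polynomiality-in-$x$ argument, to the corresponding identity already verified inside $N(\lambda,c)$ in the proof of Proposition~\ref{highestclass}.

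For the second assertion, let $L\in\mathcal{N}$. By Theorem~\ref{chinesethm}, $\supp(L)=\mu+\Z$ for some $\mu$ and all weight spaces have a common finite dimension $d\ge 1$. Since $L$ has no highest or lowest weight, neither $p$ nor $q$ nor $e$ nor $f$ can have a nonzero kernel on $L$: by Lemma~\ref{localnil}\eqref{localnil.3} a nonzero kernel would force local nilpotence and hence (being a weight module with weights bounded in one direction along that operator's action) a highest or lowest weight, a contradiction. In particular $q$ acts \emph{bijectively} on $L$. Now apply the twisting functor $B^{(q)}_{-x_0}$ for a suitable $x_0$ with $0\le\re(x_0)<1$ chosen so that the resulting module has an integral eigenvalue of the operator ``$(i+x)$'' available — more precisely, one uses the localization at $q$ to define, for each weight line, the spectrum of the element $\tfrac{1}{c}\,pq$ (which by the relations acts diagonalizably after the twist) and normalizes it to lie in $x_0+\Z$. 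The upshot is a module on which $q$ is bijective and which, by the analysis of how $p$, $e$, $f$, $h$ interact with the $q$-grading, must be built from a single ``column'' module for the commutant of $q$; that commutant is essentially governed by the $\mathfrak{sl}_2$-triple together with the relation $[p,q]=z=c$, and one identifies its relevant simple finite-dimensional-weight-space modules with the data $(\lambda,c)$ via Proposition~\ref{highestclass}, read along the $j$-direction. Untwisting then shows $L\cong B^{(q)}_{x}(N(\lambda,c))$ for uniquely determined $\lambda\in-\tfrac12+\N$, $c\in\C^*$, and $x$ with $0\le\re(x)<1$, $x\ne0$; the value $c$ is the central charge of $L$, $x\bmod 1$ is pinned down by the $pq$-spectrum, and $\lambda$ by the common weight multiplicity $d=\lambda+\tfrac12$ together with the spectrum of the $\mathfrak{sl}_2$ Casimir, which also rules out the $M(\lambda,c)$ family (infinite $d$) and forces $\lambda\in-\tfrac12+\N$.

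The main obstacle is this last identification step: showing that a module in $\mathcal{N}$, after twisting so that $q$ is bijective, is \emph{forced} to have the precise shape \eqref{qaction}, and in particular that the $j$-direction is finite of length exactly $\lambda+\tfrac12$ with the boundary wrap-around of \eqref{highestclassq}. The difficulty is that ``$q$ bijective'' turns the $\mathfrak{sl}_2$-part into something like a module over a Weyl-algebra-twisted $\mathfrak{sl}_2$, and one must prove that simplicity of $L$ together with finite weight multiplicities forces this auxiliary module to be a quotient of a Verma-type object, i.e. to come from the Dobrev--Doebner--Mrugalla list via Proposition~\ref{highestclass} rather than being something genuinely dense in that direction too. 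I expect to handle this by a careful bookkeeping argument: fix a weight vector, generate under $f$ and $e$, use that $e f - f e = h$ acts with the shifted eigenvalue $\lambda-(i+x)-2j$, and show the chain in $j$ must terminate because otherwise some weight space would be infinite-dimensional, contradicting the hypothesis; then uniqueness of the simple quotient forces the stated formulas including the wrap-around term, while $c\ne0$ is needed precisely to make the denominators $(2c)^{\lambda+3/2-s}$ meaningful and to place us outside the $\mathfrak{sl}_2$ case.
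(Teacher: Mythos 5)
There is a genuine gap in the classification half of your proposal: you begin by asserting that for $L\in\mathcal{N}$ none of $p,q,e,f$ can have a nonzero kernel, because local nilpotence ``would force a highest or lowest weight''. For $e$ and $f$ this already requires a nontrivial argument (local nilpotence does not bound the support, since the nilpotence degree may grow with the weight; the paper's Lemma~\ref{inject}\eqref{inject.1} uses finite length as an $\mathfrak{sl}_2$-module). More seriously, for $p$ and $q$ the implication is only proved under the hypothesis that $z$ acts by a \emph{nonzero} scalar (Lemma~\ref{inject}\eqref{inject.2}, whose proof needs the Kleinecke--Shirokov theorem and the construction of infinitely many independent vectors in one weight space), and at this point of your argument you have not shown that $z$ acts nonzero on $L$; nothing in the definition of $\mathcal{N}$ excludes central charge $0$ a priori. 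In the paper this is exactly Corollary~\ref{znonzero}, and it is obtained only after a preliminary round of twisting with respect to $f$: localize at $f$, use the determinant argument of Lemma~\ref{noninject}\eqref{noninject.1} to find $x$ with a vector killed by $e$, apply Proposition~\ref{highest2} and Proposition~\ref{highestclass} to recognize a copy of $N(\lambda,c)$ with $c\ne 0$, and untwist (Proposition~\ref{highest}). Your plan skips this stage and localizes at $q$ immediately, so the bijectivity of $q$ on $L$ --- the foundation of your whole analysis --- is unsupported.

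Second, the step you yourself call the main obstacle (that after twisting, $L$ is forced to contain a copy of some $N(\lambda,c)$, with the finite $j$-range and the wrap-around action) is where the real content lies, and your sketch does not supply a mechanism for it. The paper's mechanism is again the determinant trick: choose $x$ so that $\Theta^{(q)}_x(p)=p+xq^{-1}z$ has nontrivial kernel on the localized module (possible because the weight spaces are finite-dimensional of equal dimension and $q$ is invertible), then a vector killed by $p$ together with finite length gives a simple highest weight submodule, which Proposition~\ref{highestclass} identifies, the cases $pN=qN=0$ and $M(\lambda,c)$ being excluded by $L\in\mathcal{N}$ and bounded weight multiplicities. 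Your substitute --- normalizing $x$ by the ``spectrum of $\tfrac1c pq$, which acts diagonalizably after the twist'' --- is not correct as stated: on $B^{(q)}_x(N(\lambda,c))$ the element $pq$ acts only triangularly with respect to the $j$-grading, not diagonally, so at best one can use generalized eigenvalues, or the support comparison the paper uses. You also leave unaddressed that the modules $B^{(q)}_x(N(\lambda,c))$ with $x\notin\Z$ are simple and pairwise non-isomorphic, which is needed for ``precisely one'' (the paper settles this by analysing simple submodules, comparing supports, and writing the explicit isomorphism for integer shifts of $x$). Finally, in the existence half your justification that the relations hold for $x\in\N$ because \eqref{qaction} ``specializes to shifted copies of'' \eqref{highestclassq} is imprecise: for $x\in\N$ the formulas are a reindexing of the $x=0$ module, which still contains vectors $v_{i,j}$ with $i<0$ lying outside $N(\lambda,c)$; the polynomiality argument can be repaired (the relations on $v_{i,j}$ depend on $i$ and $x$ only through $i+x$, so it suffices to verify them for $i\gg 0$, $x=0$, inside $N(\lambda,c)$), or one can simply invoke the functorial construction as in Proposition~\ref{twistedmodule}.
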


The idea of the proof is the following: Consider the localization $\uf$ of $U$ with respect to $f$, which is equipped with a family of Mathieu's twisting functors as in \cite{Ma00}. We will show that every module in $\mathcal{N}$ can be ``twisted'' into a module which has a highest weight subquotient. Reversing the procedure we get that every module in $\mathcal{N}$ is (up to isomorphism) a twisted highest weight module. At this point, however, it is not clear how to handle possible redundancy issues. We will show that on all module in $\mathcal{N}$ the action of $z$ is nonzero. This information will allow us to repeat the entire procedure, with the difference that we now localize with respect to $q$ instead. This time redundancy is easily dealt with, so that the classification can be completed.
\par
We may now state the promised classification.
\begin{mycor}
Every simple weight $U$-module with finite-dimensional weight spaces is either:
\begin{itemize}
\item
A dense $\mathfrak{sl}_2$-module, see \cite[p. 72]{Ma10}.
\item
A highest weight $U$-module, see Proposition \ref{highestclass}.
\item
A lowest weight $U$-module, see \cite[Theorem~1]{Do97}.
\item
Of the form ${B}^{(q)}_x(N(\lambda,c))$, as defined in Theorem \ref{mainresult}.
\end{itemize}
\end{mycor}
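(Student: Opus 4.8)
The plan is to read off the corollary from the results already in place, by a short case analysis; almost all of the content sits in Proposition~\ref{highestclass}, in \cite{Do97,Wu13}, and above all in Theorem~\ref{mainresult}, so the argument is essentially bookkeeping. Let $V$ be a simple weight $U$-module with finite-dimensional weight spaces. By Schur's lemma $z$ acts on $V$ as a scalar, and by simplicity every nonzero weight vector generates $V$; hence if $\supp(V)$ has a maximal (resp.\ minimal) element, then $V$ is a highest (resp.\ lowest) weight module in the sense of Section~\ref{s0}.

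First I would dispose of the degenerate case in which both $p$ and $q$ act as zero on $V$. Then $z=[p,q]$ also acts as zero, so $V$ is precisely a simple weight $\mathfrak{sl}_2$-module with finite-dimensional weight spaces, and the classical classification recalled in \cite[p.~72]{Ma10} says that $V$ is then finite-dimensional, a highest weight module, a lowest weight module, or a dense module. Since a finite-dimensional simple $\mathfrak{sl}_2$-module is in particular a highest weight module, $V$ lands in one of the first three bullet points (with the caveat that in this $\mathfrak{sl}_2$ situation the relevant reference for the highest and lowest weight cases is \cite[p.~72]{Ma10} rather than Proposition~\ref{highestclass} or \cite[Theorem~1]{Do97}).

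It remains to treat the case in which $p$ or $q$ acts nonzero, and here I would split according to whether $\supp(V)$ has a maximal element, a minimal element (but no maximal one), or neither --- a manifestly exhaustive trichotomy. In the first case $V$ is a simple highest weight $U$-module with finite-dimensional weight spaces on which $p$ or $q$ acts nonzero, so Proposition~\ref{highestclass} identifies it with some $M(\lambda,c)$ or $N(\lambda,c)$. In the second case $V$ is the corresponding sort of simple lowest weight module, classified by \cite[Theorem~1]{Do97} (transported along the isomorphism $\psi$ from the proof of Proposition~\ref{highestclass}). In the third case $V$ belongs to $\mathcal{N}$ by the very definition of that class, so Theorem~\ref{mainresult} puts $V$ into the form $B^{(q)}_x(N(\lambda,c))$. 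This exhausts all possibilities. There is no real obstacle beyond keeping the cases straight; the only point that warrants a word of care is the boundary behaviour just noted in the $\mathfrak{sl}_2$ case, where the finite-dimensional and the (infinite-dimensional) highest/lowest weight $\mathfrak{sl}_2$-modules are not instances of $M(\lambda,c)$ or $N(\lambda,c)$ and so must be accounted for by the $\mathfrak{sl}_2$ reference.
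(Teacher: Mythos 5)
Your proof is correct and follows essentially the same route the paper intends: the corollary is an immediate bookkeeping consequence of the $\mathfrak{sl}_2$ classification (the case $p=q=0$), Proposition~\ref{highestclass}, \cite[Theorem~1]{Do97}, and Theorem~\ref{mainresult} applied to the class $\mathcal{N}$, which is exactly the trichotomy/case split you carry out. Your remark about the $\mathfrak{sl}_2$ highest/lowest weight modules being covered by the $\mathfrak{sl}_2$ reference rather than by Proposition~\ref{highestclass} is an accurate reading of the bullet points and does not affect the conclusion.
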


\section{Mathieu's twisting functors}
\label{s35}

In this section we give all details on Mathieu's twisting functors from \cite{Ma00}, our main technical tool.

Let $u\in \{ q,f\}$. Then, by Lemma~\ref{localnil}, $\ad_u$ is locally nilpotent on $U$. Now by Lemma 4.2 
in \cite{Ma00}, $q$ and $f$ satisfy Ore's localizability conditions and hence we can consider the corresponding
localization $U^{(u)}$. We have $U\subset U^{(u)}$ is a ring extension in which $u$ is invertible, moreover, 
every element of $U^{(u)}$ can be written on the form $u^{-n}s$, for some $s\in U$ and $n\in\N$. For
$U^{(u)}$ we have the following analogue to the Poincar\'e-Birkhoff-Witt theorem.
\begin{myprop}
\label{pbwu}
The set $\{q^{i_1}f^{i_2}p^{i_3}e^{i_4}h^{i_5}z^{i_6}\}_{i_1\in\Z \text{ and } i_2\dots,i_6\in\N}$ is a basis for $\uq$, and $\{q^{i_1}f^{i_2}p^{i_3}e^{i_4}h^{i_5}z^{i_6}\}_{i_2\in\Z \text{ and } i_1,i_3,\dots,i_6\in\N}$ is a basis for $\uf$. 
\end{myprop}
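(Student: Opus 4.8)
The plan is to prove the spanning property and linear independence separately, for each of $\uq$ and $\uf$, reducing everything to the ordinary Poincar\'e--Birkhoff--Witt theorem for $U$ together with the two facts already recorded above: that $U$ embeds into $U^{(u)}$, and that every element of $U^{(u)}$ has the form $u^{-n}s$ with $s\in U$ and $n\in\N$.

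For the spanning property I would start from such an expression $u^{-n}s$, expand $s$ in the PBW basis of $U$, and move the factor $u^{-n}$ into the slot occupied by $u$ in the chosen monomial order. For $u=q$ this is immediate, since $q$ is the outermost (leftmost) factor: $q^{-n}q^{i_1}f^{i_2}p^{i_3}e^{i_4}h^{i_5}z^{i_6}=q^{i_1-n}f^{i_2}p^{i_3}e^{i_4}h^{i_5}z^{i_6}$, which lies in the proposed set (now with $i_1-n\in\Z$). For $u=f$ the only thing to check is that $f^{-n}$ can be pushed past $q^{i_1}$, which holds because $[f,q]=0$; hence $f^{-n}q^{i_1}f^{i_2}p^{i_3}e^{i_4}h^{i_5}z^{i_6}=q^{i_1}f^{i_2-n}p^{i_3}e^{i_4}h^{i_5}z^{i_6}$, again a member of the proposed set. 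Since every element of $U^{(u)}$ is a finite $\C$-combination of such terms, the proposed set spans.

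For linear independence, suppose a finite $\C$-linear combination of the proposed monomials vanishes in $U^{(u)}$. Choose $N\in\N$ large enough that adding $N$ to the (possibly negative) exponent of $u$ in every monomial appearing in the relation produces a nonnegative exponent, and left-multiply the relation by $u^N$. Since $u$ is invertible in $U^{(u)}$, this multiplication is injective, so the resulting relation still holds; and, using $[f,q]=0$ in the case $u=f$, every monomial $q^{i_1}f^{i_2}p^{i_3}e^{i_4}h^{i_5}z^{i_6}$ now has all exponents in $\N$, i.e.\ is a genuine PBW monomial of $U\subseteq U^{(u)}$. The shift by $N$ is injective on index tuples, so these are pairwise distinct PBW basis vectors of $U$, and the PBW theorem for $U$ forces all coefficients to vanish.

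I do not expect a serious obstacle here; the only points requiring a moment's care are (i) that the relation $[f,q]=0$ is exactly what is needed to bring a negative power of $f$ into its PBW position, and that $q$ sits at the outermost end of the chosen monomial order so that for $u=q$ no commutation is needed at all; and (ii) that transferring a relation in $U^{(u)}$ back to $U$ is legitimate because $u$ is a non-zero-divisor and $U\hookrightarrow U^{(u)}$, both of which are already available from the discussion preceding the proposition. One could alternatively extend the standard filtration of $U$ to $U^{(u)}$ and argue on the associated graded algebra, but the direct computation above is shorter.
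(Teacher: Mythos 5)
Your argument is correct and essentially identical to the paper's: spanning is obtained from the normal form $u^{-n}s$ together with the PBW basis of $U$ (using $[f,q]=0$ to place negative powers of $f$), and linear independence by left-multiplying a hypothetical relation by a sufficiently large power of $u$ to land on distinct PBW monomials of $U$. No substantive differences; your write-up just spells out the spanning step a bit more explicitly than the paper does.
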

\begin{proof}
Recall that, by the Poincar\'e-Birkhoff-Witt theorem, 
\begin{equation*}
\{q^{i_1}f^{i_2}p^{i_3}e^{i_4}h^{i_5}z^{i_6}\}_{i_1,\dots,i_6\in\N}
\end{equation*}
is a basis for $U$. By the definition of localization, 
\begin{equation*}
\{q^{i_1}f^{i_2}p^{i_3}e^{i_4}h^{i_5}z^{i_6}\}_{i_1\in\Z \text{ and } i_2\dots,i_6\in\N}
\end{equation*}
spans $\uq$. Next, assume that we have linearly dependent different elements 
\begin{equation*}
u_1,\dots,u_n\in\{q^{i_1}f^{i_2}p^{i_3}e^{i_4}h^{i_5}z^{i_6}\}_{i_1\in\Z \text{ and } i_2\dots,i_6\in\N}
\end{equation*}
so that $a_1u_1+\dots+a_nu_n=0$, for some $0\ne a_1,\dots, a_n\in\C$. Then for some $m\in\N$, the elements 
$q^mu_1,\dots,q^mu_n$ are different basis elements of $U$ and $a_1q^mu_1+\dots+a_nq^mu_n=0$, a contradiction.
\par
Since $q$ and $f$ commute, a similar argument works for $\uf$ as well.
\end{proof}

\begin{myprop}
\label{auto}
For $x\in\C$ the assignment
\begin{equation}
\label{autoformulae}
\begin{array}{lcllcl}
{\Theta}^{(q)}_x(q^{\pm 1})&=&q^{\pm 1},& {\Theta}^{(q)}_x(f)&=&f,\\
{\Theta}^{(q)}_x(z)&=&z,&{\Theta}^{(q)}_x(h)&=&h-x,\\ 
{\Theta}^{(q)}_x(p)&=&p+xq^{-1}z,\,\,&{\Theta}^{(q)}_x(e)&=&e+xq^{-1}p+\frac{1}{2}x(x-1)q^{-2}z
\end{array}
\end{equation}
extends uniquely to an automorphism ${\Theta}^{(q)}_x:\uq\rightarrow\uq$ and the assignment
\begin{equation}
\label{autoformulae2}
\begin{array}{lcllcl}
{\Theta}^{(f)}_x(q)&=&q,&{\Theta}^{(f)}_x(f^{\pm 1})&=&f^{\pm 1},\\
{\Theta}^{(f)}_x(z)&=&z,&{\Theta}^{(f)}_x(h)&=&h-2x,\\
{\Theta}^{(f)}_x(p)&=&p-xqf^{-1},\,\,\,&{\Theta}^{(f)}_x(e)&=&e+x(h-1-x)f^{-1}
\end{array}
\end{equation}
extends uniquely to an automorphism ${\Theta}^{(f)}_x:\uf\rightarrow\uf$.
\end{myprop}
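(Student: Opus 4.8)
The plan is to recognise the assignments \eqref{autoformulae} and \eqref{autoformulae2} as the ``formal conjugation'' automorphisms attached to the locally $\ad$-nilpotent elements $q$ and $f$, in the spirit of \cite{Ma00}. Uniqueness is immediate: $U$ is generated as a $\C$-algebra by $q,f,h,z,p,e$, so $\uq$ is generated by these together with $q^{-1}$, whose image is forced to be the inverse of the image of $q$; hence an algebra homomorphism out of $\uq$ is determined by its values on $q^{\pm1},f,h,z,p,e$, and likewise for $\uf$. The whole content is therefore existence.

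For existence, fix $u\in\{q,f\}$. By Lemma~\ref{localnil}, $\ad_u$ is locally nilpotent on $U$, hence also on $U^{(u)}$ (since $\ad_u(u^{-1})=0$ and every element of $U^{(u)}$ has the form $u^{-n}s$ with $s\in U$). So
\begin{displaymath}
\theta_x(v):=\sum_{k\ge 0}\binom{x}{k}(\ad_u^k v)\,u^{-k}
\end{displaymath}
is a well-defined finite sum in $U^{(u)}$ for every $v\in U^{(u)}$, and for fixed $v$ it is a polynomial in $x$ with coefficients in $U^{(u)}$. For $x=n\in\Z$, the twisted binomial identity $u^n v=\sum_k\binom nk(\ad_u^k v)u^{n-k}$ — proved by induction for $n\ge0$ and by a Vandermonde computation for $n<0$ — gives $\theta_n(v)=u^n v u^{-n}$, i.e.\ $\theta_n$ is honest conjugation by $u^n$, hence an algebra automorphism of $U^{(u)}$. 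In particular $\theta_n(vw)=\theta_n(v)\theta_n(w)$ for all $n\in\Z$; since, for fixed $v,w$, both sides are polynomial in $x$, the identity holds for every $x\in\C$, so each $\theta_x$ is an algebra endomorphism of $U^{(u)}$ fixing $u^{\pm1}$. Finally, evaluating $\theta_{-x}$ on the generators — using $\ad_q h=q$, $\ad_q p=-z$, $\ad_q e=-p$, $\ad_q^2 e=z$ for $u=q$, and $\ad_f h=2f$, $\ad_f p=q$, $\ad_f e=-h$, $\ad_f^2 e=-2f$ for $u=f$ — identifies $\theta_{-x}$ with $\Theta^{(q)}_x$ (resp.\ $\Theta^{(f)}_x$); this shows \eqref{autoformulae} and \eqref{autoformulae2} do extend to algebra endomorphisms.

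That these endomorphisms are automorphisms follows from the composition law $\theta_x\circ\theta_y=\theta_{x+y}$, which reduces to $u^x u^y=u^{x+y}$ for integer $x,y$ and then extends to all $x,y\in\C$ by the same polynomiality argument on generators; since $\theta_0=\id$, the inverse of $\theta_x$ is $\theta_{-x}$. I expect the main obstacle to be bookkeeping rather than anything conceptual: matching the closed formulas in \eqref{autoformulae}, \eqref{autoformulae2} with the series for $\theta_{-x}$ on $p$ and $e$ requires care because $q^{-1}$ and $p$ (respectively $f^{-1}$ and $h$) do not commute, so terms like $xpq^{-1}$ must be rewritten as $xq^{-1}p-xq^{-2}z$ before the coefficients can be read off and identities such as $\tfrac12 x(x+1)-x=\tfrac12 x(x-1)$ can be checked. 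A purely elementary alternative, bypassing $\theta_x$ altogether, is to verify directly that the prescribed images of the generators satisfy all the brackets of \eqref{commrelations} together with $q\,q^{-1}=1$, and that $\Theta^{(q)}_{-x}$ (resp.\ $\Theta^{(f)}_{-x}$) is a two-sided inverse; there the only laborious relations are those involving the image of $e$.
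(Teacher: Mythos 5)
Your proposal is correct, and it rests on the same two pillars as the paper's own proof: at integer values of the parameter the assignment is conjugation by a power of $u$, and an identity that depends polynomially on $x$ and holds for all $x\in\N$ holds for all $x\in\C$. The implementation, however, is genuinely different. You define the twist globally by the binomial series $\theta_x(v)=\sum_k\binom{x}{k}(\ad_u^k v)u^{-k}$ on all of $U^{(u)}$ (essentially Mathieu's original formulation), so multiplicativity for arbitrary $x$ and the composition law $\theta_x\circ\theta_y=\theta_{x+y}$ come almost for free from the integer case plus polynomiality (respectively a Vandermonde convolution), and the only computation left is matching the series on the generators with \eqref{autoformulae} and \eqref{autoformulae2}; there your $\ad$-values ($\ad_qh=q$, $\ad_qp=-z$, $\ad_qe=-p$, $\ad_q^2e=z$, $\ad_fh=2f$, $\ad_fp=q$, $\ad_fe=-h$, $\ad_f^2e=-2f$) are all correct, and you rightly flag that rewriting $xpq^{-1}$ as $xq^{-1}p-xq^{-2}z$ is what turns the coefficient $\tfrac12x(x+1)$ into the paper's $\tfrac12x(x-1)$. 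The paper instead never defines the map beyond the generators: it shows by induction that \eqref{autoformulae} and \eqref{autoformulae2} agree with the conjugation $a\mapsto u^{-x}au^{x}$ for $x\in\N$, then checks preservation of the bracket relations for general $x$ by the same polynomial-interpolation trick (expanding the discrepancy in the PBW basis of Proposition~\ref{pbwu}), and obtains invertibility from the separately proved additivity $\Theta^{(u)}_x\circ\Theta^{(u)}_{-x}=\Theta^{(u)}_0=\id$, exactly as you do. Your route buys uniformity — no case-by-case bracket check and no induction on generators — at the cost of justifying the series formalism (local nilpotency of $\ad_u$ on $U^{(u)}$, the twisted binomial identity also for negative exponents, and the fact that a $U^{(u)}$-valued polynomial vanishing on $\Z$ is zero, which again uses Proposition~\ref{pbwu}); the paper's route is more elementary and self-contained but computationally heavier. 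The details you leave as bookkeeping are routine, so the argument goes through.
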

\begin{myproof}
The uniqueness is clear as $\uq$ is generated by  $\{q^{\pm 1}, f, z, h, p, e\}$ and
$\uf$ is generated by $\{ q,f^{\pm 1}, z, h, p, e\}$. Hence we prove existence.
\par
Let $u\in\{q,f\}$ and assume that $x\in\mathbb{N}$. We claim that in this case 
Formulae~\ref{autoformulae} and~\ref{autoformulae2} correspond to restriction (to generators) of
the conjugation automorphism $a\mapsto u^{-x}au^{x}$ of $U^{(u)}$. To prove this we proceed by induction on
$x$. The base $x=0$ is immediate. Let us check the induction step from $x=k$ to $x=k+1$: For $u=q$ and $s=h$ we have
\begin{multline*}
q^{-1}\Theta^{(q)}_k(h)q
=q^{-1}(h-k)q
=q^{-1}(hq-kq)=\\
=q^{-1}(qh-q-kq)
=h-(k+1)
=\Theta^{(q)}_{k+1}(h).
\end{multline*}
For $u=q$ and $s=p$ we have 
\begin{multline*}
q^{-1}\Theta^{(q)}_k(p)q
=q^{-1}(p+kq^{-1}z)q
=q^{-1}pq+kq^{-1}z
=q^{-1}(qp+z)+kq^{-1}z=\\
=p+q^{-1}z+kq^{-1}z
=p+(k+1)q^{-1}z
=\Theta^{(q)}_{k+1}(p).
\end{multline*}
For $u=q$ and $s=e$ we have 
\begin{multline*}
q^{-1}\Theta^{(q)}_k(e)q
=q^{-1}(e+kq^{-1}p+\frac{1}{2}k(k-1)q^{-2}z)q=\\
=q^{-1}(eq+kq^{-1}pq)+\frac{1}{2}k(k-1)q^{-1}z=\\
=q^{-1}(qe+p+kq^{-1}(qp+z))+\frac{1}{2}k(k-1)q^{-2}z=\\
=e+q^{-1}p+kq^{-1}p+kq^{-2}z+\frac{1}{2}k(k-1)q^{-2}z=\\
=e+(k+1)q^{-1}p+\frac{1}{2}(k+1)(k+1-1)q^{-2}z
=\Theta^{(q)}_{k+1}(e).
\end{multline*}
\par
All other cases for $u=q$ are obvious. For $u=f$ and  $s=h$ we have
\begin{multline*}
f^{-1}\Theta^{(f)}_k(h)f
=f^{-1}(h-2k)f
=f^{-1}(hf-2kf)=\\
=f^{-1}(fh-2f-2kf)
=h-2(k+1)
=\Theta^{(f)}_{k+1}(h).
\end{multline*}
For $u=f$ and $s=p$ we have
\begin{multline*}
f^{-1}\Theta^{(f)}_k(p)f
=f^{-1}(p-kqf^{-1})f
=f^{-1}pf-kf^{-1}qf^{-1}f=\\
=f^{-1}(fp-q)-kf^{-1}q
=p-(k+1)qf^{-1}
=\Theta^{(f)}_{k+1}(p).
\end{multline*}
For $u=f$ and  $s=e$ we have
\begin{multline*}
f^{-1}\Theta^{(f)}_k(e)f
=f^{-1}(e+k(h-1-k)f^{-1})f
=f^{-1}(ef+k(h-1-k))=\\
=f^{-1}(fe+h+k(h-1-k))
=e+f^{-1}(k+1)(h-k)
=e+f^{-1}(k+1)(h-k)ff^{-1}=\\
=e+f^{-1}f(k+1)(h-2-k)f^{-1}
=e+(k+1)(h-1-(k+1))f^{-1}
=\Theta^{(f)}_{k+1}(e).
\end{multline*}
\par
To show that $\Theta^{(u)}_x$, given by extending Formulae~\ref{autoformulae} and~\ref{autoformulae2} linearly and multiplicatively, defines an endomorphism of  $U^{(u)}$ for general $x\in\mathbb{C}$, it suffices to show that each ${\Theta}^{(u)}_x$ preserves Lie brackets of the generators of $U^{(u)}$, \ie that ${\Theta}^{(u)}_x([s_1,s_2])={\Theta}^{(u)}_x(s_1){\Theta}^{(u)}_x(s_2)-\Theta^{(u)}_x(s_2)\Theta^{(u)}_x(s_1)$ for all $s_1,s_2\in\{ q, f, z, h, p, e, u^{-1}\}$. Let $s_1,s_2\in\{ q, f, z, h, p, e, u^{-1}\}$.
Using Formulae~\ref{autoformulae} and~\ref{autoformulae2} we can write 
\begin{equation}
Q_{s_1,s_2}:={\Theta}^{(u)}_x([s_1,s_2])-({\Theta}^{(u)}_x(s_1){\Theta}^{(u)}_x(s_2)-\Theta^{(u)}_x(s_1)\Theta^{(u)}_x(s_2))
\end{equation}
in the form $\sum_{i\in F}{g_i(x)b_i}$ where $F$ is a finite subset of the standard basis of $U^{(u)}$
and each $g_i(x)$ is a polynomial in $x$. From the above, we have $g_i(x)=0$ for all $x\in\N$. Hence
all $g_i=0$ and $Q_{s_1,s_2}=0$. 
\par
Finally, from the next proposition we have that $\Theta^{(u)}_x\circ\Theta^{(u)}_{-x}=\Theta^{(u)}_{0}=\id$, 
implying that $\Theta^{(u)}_x$ is invertible and thus an automorphism.
\qed
\end{myproof}
\begin{myprop}
\label{thetaadditive}
For all $x,y\in\C$ and $u\in\{q,f\}$ we have $\Theta^{(u)}_x\circ\Theta^{(u)}_y=\Theta^{(u)}_{x+y}$.
\end{myprop}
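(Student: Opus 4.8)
The plan is to reduce the identity $\Theta^{(u)}_x\circ\Theta^{(u)}_y=\Theta^{(u)}_{x+y}$ to a polynomial identity in two variables, exactly as was done for a single variable in the proof of Proposition~\ref{auto}. Since each $\Theta^{(u)}_x$ is an algebra automorphism of $U^{(u)}$ and $U^{(u)}$ is generated by the finite set $G:=\{q^{\pm1},f,z,h,p,e\}$ (for $u=q$) respectively $\{q,f^{\pm1},z,h,p,e\}$ (for $u=f$), it suffices to check that $\Theta^{(u)}_x(\Theta^{(u)}_y(s))=\Theta^{(u)}_{x+y}(s)$ for each generator $s\in G$; the general case then follows by applying both sides multiplicatively. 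So the whole proposition comes down to a finite list of verifications, one per generator.

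First I would dispose of the easy generators: for $s\in\{q^{\pm1},f^{\pm1},z\}$ both sides are visibly $s$, and for $s=h$ one has $\Theta^{(q)}_y(h)=h-y$, so $\Theta^{(q)}_x(h-y)=h-x-y=\Theta^{(q)}_{x+y}(h)$, and similarly with $h-2y$ in the $f$-case. The substantive generators are $p$ and $e$. For $u=q$ and $s=p$: applying $\Theta^{(q)}_x$ to $\Theta^{(q)}_y(p)=p+yq^{-1}z$ gives $p+xq^{-1}z+yq^{-1}z=p+(x+y)q^{-1}z$ since $\Theta^{(q)}_x$ fixes $q^{-1}$ and $z$. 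For $u=q$ and $s=e$: applying $\Theta^{(q)}_x$ to $\Theta^{(q)}_y(e)=e+yq^{-1}p+\tfrac12 y(y-1)q^{-2}z$ produces $e+xq^{-1}p+\tfrac12 x(x-1)q^{-2}z + yq^{-1}(p+xq^{-1}z)+\tfrac12 y(y-1)q^{-2}z$, and collecting the $q^{-2}z$ coefficient gives $\tfrac12 x(x-1)+xy+\tfrac12 y(y-1)=\tfrac12\big((x+y)^2-(x+y)\big)=\tfrac12(x+y)(x+y-1)$, as required; the $q^{-1}p$ coefficient is $x+y$. The $u=f$ computations for $p$ and $e$ are analogous: for $p$ one uses that $\Theta^{(f)}_x$ fixes $q$ and $f^{-1}$; for $e$ one uses $\Theta^{(f)}_y(e)=e+y(h-1-y)f^{-1}$ and the fact that $\Theta^{(f)}_x(h)=h-2x$, so applying $\Theta^{(f)}_x$ yields $e+x(h-1-x)f^{-1}+y(h-2x-1-y)f^{-1}=e+\big((x+y)h-(x+y)-(x+y)^2\big)f^{-1}=e+(x+y)(h-1-(x+y))f^{-1}$.

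Alternatively, and more cleanly, I would invoke the already-established special case: by the argument in Proposition~\ref{auto}, for $x,y\in\N$ the maps $\Theta^{(u)}_x$ and $\Theta^{(u)}_y$ are the conjugations $a\mapsto u^{-x}au^{x}$ and $a\mapsto u^{-y}au^{y}$, whose composite is conjugation by $u^{x+y}$, i.e.\ $\Theta^{(u)}_{x+y}$; so the identity holds for all $x,y\in\N$. Then, for any fixed generator $s$, both $\Theta^{(u)}_x(\Theta^{(u)}_y(s))$ and $\Theta^{(u)}_{x+y}(s)$ are elements of $U^{(u)}$ whose coordinates in the PBW-type basis of Proposition~\ref{pbwu} are polynomials in $x$ and $y$ (this is clear from Formulae~\ref{autoformulae} and~\ref{autoformulae2}, which are polynomial in the parameter). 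A polynomial in two variables vanishing on $\N\times\N$ is identically zero, so these coordinate polynomials agree for all $x,y\in\C$, giving the result on generators and hence everywhere. I do not expect a genuine obstacle here; the only thing to be careful about is the bookkeeping in the $e$-cases, where the inhomogeneous quadratic term $\tfrac12 x(x-1)$ has to be combined correctly with the cross term coming from $\Theta^{(u)}_x$ acting on the lower-order part of $\Theta^{(u)}_y(e)$.
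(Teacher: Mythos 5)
Your first argument is exactly the paper's proof: reduce to the generators (all cases except $e$, and perhaps $p$, being immediate) and verify the $e$-case by the same direct calculation, for both $u=q$ and $u=f$. One small caution: at this point in the paper only the homomorphism property of $\Theta^{(u)}_x$ is available (its invertibility in Proposition~\ref{auto} is deduced \emph{from} this proposition), but that is all your generator argument actually needs, so the proof stands.
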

\begin{myproof}
This is a consequence of \eqref{autoformulae} and \eqref{autoformulae2} applied to the generators of $U^{(u)}$. The only non-immediate case is to check that $\Theta^{(u)}_x\circ\Theta^{(u)}_y(e)=\Theta^{(u)}_{x+y}(e)$.
This is done by the following direct calculation for $u=q$ and $u=f$, respectively:
\begin{equation*}
\begin{aligned}
\Theta^{(q)}_{x}\circ\Theta^{(q)}_y(e)&=\Theta^{(q)}_x(e+yq^{-1}p+\frac{1}{2}y(y-1)q^{-2}z)\\
&=e+xq^{-1}p+\frac{1}{2}x(x-1)q^{-2}z+yq^{-1}(p+xq^{-1}z)+\frac{1}{2}y(y-1)q^{-2}z\\
&=e+(x+y)q^{-1}p+\frac{1}{2}(x+y)(x+y-1)q^{-2}z\\
&=\Theta^{(q)}_{x+y}(e),
\end{aligned}
\end{equation*}
and
\begin{equation*}
\begin{aligned}
\Theta^{(f)}_{x}\circ\Theta^{(f)}_y(e)&=\Theta^{(f)}_x(e+y(h-1-y)f^{-1})\\
&=e+x(h-1-x)f^{-1}+y(h-2x-y-1)f^{-1}\\
&=e+(x+y)(h-1-x-y)f^{-1}\\
&=\Theta^{(f)}_{x+y}(e).
\end{aligned}
\end{equation*}
\qed
\end{myproof}
Now we can define Mathieu's twisting functors from \cite{Ma00} in our situation. Denote by $U$-Mod the category 
of all $U$-modules and their morphisms. Let $u\in\{ q, f\}$ and $x\in\C$. Then the \emph{twisting functor} 
\begin{equation*}
B^{(u)}_x:U\text{-Mod}\rightarrow U\text{-Mod}
\end{equation*}
is defined as composition of the following three functors:
\begin{enumerate}
\item[(i)]
the induction functor $\Ind:= U^{(u)}\otimes_U{}_-$,
\item[(ii)]
twisting the $U^{(u)}$-action by $\Theta^{(u)}_{x}$,
\item[(iii)]
the restriction functor $\Res$.
\end{enumerate}
More explicitly, for $V\in U$-Mod, then the module $B^{(u)}_x(V)$ is isomorphic to $U^{(u)}_x\otimes_U V$, where
$U^{(u)}_x$ is a $U$-bimodule obtained as follows: it coincides with the algebra $U^{(u)}$ as a vector space,
the right action is given by multiplication $p\cdot a:=pa$, and the left action is given by multiplication twisted by 
$\Theta^{(u)}_{x}$, that is $a\cdot p:=\Theta^{(u)}_{x}(a)p$ (here $p\in U^{(u)}$ and $a\in U$).
\begin{mylem}
\label{indres}
Let $V$ be a $U$-module on which $u\in\{q,f\}$ acts bijectively, and let $W$ be a $U^{(u)}$-module. Then 
\begin{enumerate}[$($a$)$]
\item
$\Ind\circ\Res(W)\cong W$.
\item
$\Res\circ\Ind(V)\cong V$.
\end{enumerate}
\end{mylem}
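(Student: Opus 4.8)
The plan is to exhibit, in each case, the obvious natural morphism and to check that it is a bijection; the only tools needed are that every element of $U^{(u)}$ has the form $u^{-n}s$ with $s\in U$ and $n\in\N$, and that $u$ acts bijectively on every module in sight.

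\emph{Part (a).} Let $W$ be a $U^{(u)}$-module. On $\Res(W)$ the element $u$ acts bijectively, being invertible in $U^{(u)}$. Consider the multiplication map $\varphi\colon U^{(u)}\otimes_U\Res(W)\to W$, $a\otimes w\mapsto aw$. It is well defined because the right $U$-action on $U^{(u)}$ and the $U$-action on $\Res(W)$ are both obtained by restriction from the $U^{(u)}$-structure, and it is plainly a $U^{(u)}$-module homomorphism, natural in $W$. It is surjective since $1\otimes w\mapsto w$. For injectivity I first observe that every element of $\Ind\circ\Res(W)$ can be put in the form $u^{-n}\otimes w$: a finite sum $\sum_k u^{-n_k}s_k\otimes w_k$ with $s_k\in U$ equals $u^{-n}\otimes\bigl(\sum_k u^{\,n-n_k}s_k w_k\bigr)$ once $n=\max_k n_k$ and one slides the elements $u^{\,n-n_k}s_k\in U$ across the tensor sign. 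If $\varphi(u^{-n}\otimes w)=u^{-n}w=0$ in $W$, then $w=u^n(u^{-n}w)=0$ in $\Res(W)$, so $u^{-n}\otimes w=0$. Hence $\varphi$ is an isomorphism.

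\emph{Part (b).} Since $u$ acts bijectively on $V$, the structure homomorphism $U\to\mathrm{End}_{\C}(V)$ sends $u$ to a bijective endomorphism, hence to a unit of the ring $\mathrm{End}_{\C}(V)$; by the universal property of the Ore localization it therefore factors uniquely through a ring homomorphism $U^{(u)}\to\mathrm{End}_{\C}(V)$. This equips $V$ with a $U^{(u)}$-module structure $\widetilde{V}$ extending the given one, so that $\Res(\widetilde{V})=V$. Applying part (a) with $W=\widetilde{V}$ gives $\Ind(V)=U^{(u)}\otimes_U\Res(\widetilde{V})\cong\widetilde{V}$ as $U^{(u)}$-modules, and restricting this isomorphism to $U$ yields $\Res\circ\Ind(V)\cong\Res(\widetilde{V})=V$.

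I do not expect a genuine obstacle: the whole content is the bookkeeping of clearing denominators across $\otimes_U$, and the one point that deserves a moment's care is recognising, in part (b), that a $U$-module on which $u$ acts bijectively carries a unique compatible $U^{(u)}$-action, which is exactly what reduces (b) to (a).
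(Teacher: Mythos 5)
Your proof is correct and takes essentially the same route as the paper, whose one-line argument simply observes that, since $u$ acts bijectively, the canonical map $v\mapsto 1\otimes v$ is an isomorphism in both cases; your version supplies the denominator-clearing details (every element of the induced module is of the form $u^{-n}\otimes w$) and, for part (b), makes explicit via the universal property of the Ore localization that $V$ carries a compatible $U^{(u)}$-structure, which is exactly the content the paper leaves implicit.
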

\begin{proof}
Since $u$ acts bijectively on both $V$ and $\Res(W)$, the linear map $v\mapsto 1\otimes v$ 
defines an isomorphism from $V$ or $W$ to $\Res\circ\Ind(V)$ or $\Ind\circ\Res(W)$, respectively.
\end{proof}
The next result facilitates the composition and inversion of twisting functors when applied to $\mathcal{N}$. 
\begin{myprop}
\label{Bcomp}
Let $x,y\in\C$, $u\in\{ q,f\}$. Then we have the following:
\begin{enumerate}[$($a$)$]
\item\label{Bcomp.1}
$B^{(u)}_x\circ B^{(u)}_y\cong B^{(u)}_{x+y}$.
\item\label{Bcomp.2}
If $V\in U$-Mod and $u$ acts bijectively on $V$, then $B^{(u)}_0(V)\cong V$.
\end{enumerate}
\end{myprop}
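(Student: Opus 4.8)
The plan is to reduce both claims to the functorial behaviour of ``twisting the $U^{(u)}$-action'' together with Lemma~\ref{indres} and Proposition~\ref{thetaadditive}. Write $T_\phi$ for the functor that twists a $U^{(u)}$-module by an automorphism $\phi$ of $U^{(u)}$, so that $T_\phi(W)$ has underlying space $W$ with $a\cdot w = \phi(a)w$, and abbreviate $T_x := T_{\Theta^{(u)}_x}$; thus $B^{(u)}_x = \Res \circ T_x \circ \Ind$. Part~\eqref{Bcomp.2} is then immediate: putting $x=y=0$ in Proposition~\ref{thetaadditive} and using that $\Theta^{(u)}_0$ is an automorphism gives $\Theta^{(u)}_0 = \id$, so $T_0$ is the identity functor and $B^{(u)}_0 = \Res \circ \Ind$; when $u$ acts bijectively on $V$, Lemma~\ref{indres}(b) yields $B^{(u)}_0(V) = \Res \circ \Ind(V) \cong V$.

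For part~\eqref{Bcomp.1} I would unwind the definitions to
\begin{equation*}
B^{(u)}_x \circ B^{(u)}_y = \Res \circ T_x \circ \Ind \circ \Res \circ T_y \circ \Ind .
\end{equation*}
Since $T_y(\Ind(V))$ is a $U^{(u)}$-module, Lemma~\ref{indres}(a) supplies a natural isomorphism cancelling the inner $\Ind \circ \Res$, so that $B^{(u)}_x \circ B^{(u)}_y \cong \Res \circ T_x \circ T_y \circ \Ind$. It then remains to identify $T_x \circ T_y$: directly from the definition, twisting a $U^{(u)}$-module first by $\Theta^{(u)}_y$ and then by $\Theta^{(u)}_x$ gives the action $a \cdot w = \Theta^{(u)}_y(\Theta^{(u)}_x(a))\, w = (\Theta^{(u)}_y \circ \Theta^{(u)}_x)(a)\, w$, and $\Theta^{(u)}_y \circ \Theta^{(u)}_x = \Theta^{(u)}_{x+y}$ by Proposition~\ref{thetaadditive}; hence $T_x \circ T_y = T_{x+y}$. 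Substituting, $B^{(u)}_x \circ B^{(u)}_y \cong \Res \circ T_{x+y} \circ \Ind = B^{(u)}_{x+y}$.

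Each step is routine, so I do not expect a genuine obstacle, only two places to be handled carefully. The first is bookkeeping: getting the order of composition of the two twists right, so that it is precisely the symmetry of $x+y$ in Proposition~\ref{thetaadditive} that is invoked. The second is checking that all the identifications used --- the isomorphisms of Lemma~\ref{indres} (visibly natural, being given by $v \mapsto 1 \otimes v$) and the equality $T_x \circ T_y = T_{x+y}$ of functors --- are natural in the module argument, so that the pointwise isomorphisms assemble into the claimed isomorphism of functors $B^{(u)}_x \circ B^{(u)}_y \cong B^{(u)}_{x+y}$.
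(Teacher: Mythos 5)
Your proposal is correct and follows essentially the same route as the paper: unwind $B^{(u)}_x\circ B^{(u)}_y$ into its three-step definition, cancel the inner $\Ind\circ\Res$ via Lemma~\ref{indres}, and merge the two twists into one via Proposition~\ref{thetaadditive}, with part~(b) handled by $\Theta^{(u)}_0=\id$ and Lemma~\ref{indres}. Your extra care about the order of composition of the twists and naturality is consistent with (and slightly more explicit than) the paper's argument.
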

\begin{myproof}
By definition, for $V\in U$-Mod we have natural isomorphisms 
\begin{equation}
\begin{aligned}
B^{(u)}_x\circ B^{(u)}_y(V)&=\Res((\Ind(\Res((\Ind(V))_y)))_x)\\ &\cong \Res(((\Ind(V))_y)_x)\\ &\cong \Res((\Ind(V))_{x+y})\\ &=B^{(u)}_{x+y}(V),
\end{aligned}
\end{equation}
where the first isomorphism follows from Lemma~\ref{indres}, 
and the second from Proposition~\ref{thetaadditive}. This implies claim \eqref{Bcomp.1}.

Again by definition, we have 
\begin{equation}
\begin{aligned}
B^{(u)}_0(V)&=\Res((\Ind(V))_0)\\ &\cong\Res(\Ind(V))\\ &\cong V,
\end{aligned}
\end{equation}
where the second isomorphism follows from Lemma~\ref{indres}, proving claim \eqref{Bcomp.2}.
\qed
\end{myproof}
The modules $B^{(q)}_x(N(\lambda,c))$ are described explicitly in the following proposition.
\begin{myprop}
\label{twistedmodule}
For $\lambda\in -\frac{1}{2}+\N$, $x\in\C$ and $c\in\C^*$, the $U$-module ${B}^{(q)}_x(N(\lambda,c))$ has basis $\{ v_{i,j}\}_{i,j\in\mathbb{Z}, 0\le j\le\lambda+\frac{1}{2}}$, where $v_{i,j}=q^if^j\otimes_U v$, and $v$ is a  highest weight vector of $N(\lambda,c)$. The action of $U$ in this basis is given by:
\begin{equation}
\label{qqaction}
\begin{array}{rcl}
qv_{i,j}&=&v_{i+1,j};\\
fv_{i,j}&=&
\begin{cases}
v_{i,j+1},&\text{if $j<\lambda+\frac{1}{2}$};\\
-\sum_{s=0}^{\lambda+\frac{1}{2}}{\frac{1}{(2c)^{\lambda+\frac{3}{2}-s}}\binom{\lambda+\frac{3}{2}}{s}v_{i+2\lambda+3-2s,s}},&\text{if $j=\lambda+\frac{1}{2}$;}
\end{cases}
\\
zv_{i,j}&=&cv_{i,j};\\
hv_{i,j}&=&(\lambda-(i+x)-2j)v_{i,j};\\
pv_{i,j}&=&-jv_{i+1,j-1}+c(i+x)v_{i-1,j};\\
ev_{i,j}&=&j(\lambda+1-(i+x)-j)v_{i,j-1}+\frac{1}{2}c(i+x)((i+x)-1)v_{i-2,j}.
\end{array}
\end{equation}
\end{myprop}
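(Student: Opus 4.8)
The plan is to unwind the definition of the twisting functor $B^{(q)}_x$ recalled above. By construction, $B^{(q)}_x(N(\lambda,c))$ coincides as a vector space with $\uq\otimes_U N(\lambda,c)$, and carries the twisted $U$-action $a\cdot(p\otimes w)=\Theta^{(q)}_x(a)\,p\otimes w$ for $a\in U$, $p\in\uq$, $w\in N(\lambda,c)$; in particular it is a $U$-module automatically, so no module axioms have to be verified. Writing $v:=v_{0,0}$ for the highest weight vector of $N(\lambda,c)$ from Proposition~\ref{highestclass}, we have $hv=\lambda v$, $pv=ev=0$, $zv=cv$ and $v_{k,l}=q^kf^lv$ for $k\in\N$, $0\le l\le\lambda+\tfrac12$. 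The first step is to show that the elements $v_{i,j}:=q^if^j\otimes v$ with $i\in\Z$ and $0\le j\le\lambda+\tfrac12$ form a basis: every element of $\uq$ has the form $q^{-n}s$ with $s\in U$, $n\in\N$, and $q^{-n}\otimes v_{k,l}=q^{k-n}f^l\otimes v$, so these elements span the module; and any nontrivial linear relation among them, multiplied on the left by a sufficiently large power $q^m$, would yield $1\otimes\big(\sum a_{i,j}v_{i+m,j}\big)=0$, which is impossible since $q$ acts injectively on $N(\lambda,c)$ and hence $N(\lambda,c)$ embeds into its localization $\uq\otimes_U N(\lambda,c)$.

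It then remains to compute $a\cdot v_{i,j}=\Theta^{(q)}_x(a)\,q^if^j\otimes v$ for each generator $a$, reading $\Theta^{(q)}_x(a)$ off \eqref{autoformulae} and moving it past $q^if^j$ using the relations \eqref{commrelations}. Since $\Theta^{(q)}_x$ fixes $q$ and $z$, the cases $a=q$ and $a=z$ are immediate. For $a=f$ one gets $q^if^{j+1}\otimes v$, which equals $v_{i,j+1}$ when $j<\lambda+\tfrac12$; when $j=\lambda+\tfrac12$ one instead writes this as $q^i\otimes fv_{0,\lambda+1/2}$, substitutes the expression for $fv_{0,\lambda+1/2}$ from \eqref{highestclassq}, and uses $q^i\otimes v_{2\lambda+3-2s,s}=v_{i+2\lambda+3-2s,s}$ to obtain the stated sum. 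For $a=h$, commuting $h-x$ past $q^if^j$ gives $q^if^j(h-i-2j-x)$ and then $hv=\lambda v$ yields the weight $\lambda-(i+x)-2j$. For $a=p$ one commutes $p+xq^{-1}z$ past $q^if^j$ using $[p,q^i]=iq^{i-1}z$ and $[p,f^j]=-jf^{j-1}q$; after applying $pv=0$ and $zv=cv$ the two contributions combine to $-jv_{i+1,j-1}+c(i+x)v_{i-1,j}$.

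I expect the case $a=e$ to be the main obstacle, because $\Theta^{(q)}_x(e)=e+xq^{-1}p+\tfrac12 x(x-1)q^{-2}z$ produces several terms that must each be carried past $q^if^j$ and then collected carefully. The identities needed are $[e,q^i]=iq^{i-1}p+\tfrac12 i(i-1)q^{i-2}z$ and $[e,f^j]=jf^{j-1}(h-j+1)$, both immediate inductions from \eqref{commrelations}, together with the ones already used for $p$; applying $ev=pv=0$, $hv=\lambda v$ and $zv=cv$ afterwards, the coefficient of $v_{i,j-1}$ comes out as $j(\lambda+1-j-i)-xj=j(\lambda+1-(i+x)-j)$ and that of $v_{i-2,j}$ as $\tfrac12 ci(i-1)+xci+\tfrac12 cx(x-1)=\tfrac12 c(i+x)((i+x)-1)$, matching \eqref{qqaction} exactly. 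Apart from this bookkeeping, the only point requiring care is the embedding of $N(\lambda,c)$ into its localization used in the basis step, which rests on the injectivity of the action of $q$ visible directly from \eqref{highestclassq}.
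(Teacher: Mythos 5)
Your proposal is correct and follows essentially the same route as the paper: unwind the definition of $B^{(q)}_x$, obtain the basis by multiplying a putative relation by a large power of $q$ and using that $N(\lambda,c)$ has no $q$-torsion, and compute the generator actions by pushing $\Theta^{(q)}_x(a)$ past $q^if^j$ with the explicit commutators. The only cosmetic difference is that you compute uniformly for all $i\in\Z$ (so your identities $[p,q^i]=iq^{i-1}z$ and $[e,q^i]=iq^{i-1}p+\tfrac12 i(i-1)q^{i-2}z$ must be read in $\uq$, where they do hold for negative $i$ as well), whereas the paper first treats $i,j\ge 0$ and then reaches $i<0$ via the relation $sv_{i,j}=q^i\Theta^{(q)}_i(s)v_{0,j}$.
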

\begin{myproof}
Using definitions, Proposition~\ref{pbwu} and the fact that $z, h, p$ and $e$ all act like scalars on $v$,
we get that the vector space $B^{(q)}_x(N(\lambda,c))$ is spanned by $\{ v_{i,j}\}_{i\in\Z, j\in\N}$. For $j>\lambda+\frac{1}{2}$, however, $v_{i,j}=q^if^{j-\lambda-\frac{3}{2}}(fv_{0,\lambda+\frac{1}{2}})$, which is a linear combination of elements $v_{i',j'}$ such that $0\le j'<j$. So, by induction, ${B}^{(q)}_x(N(\lambda,c))$ is spanned by $\{v_{i,j}\}_{i,j\in\Z, 0\le j\le \lambda+\frac{1}{2}}$ as well. 
\par
The elements of the latter set are linearly independent because if for some $m\in\N$ and $c_{-m},\dots,c_m\in\C$ we would have $\sum_{|i|<m, 0\le j\le \lambda+\frac{1}{2}}{c_i v_{i,j}}=0$, then it would follow that 
\begin{multline*}
0=\sum_{|i|<m, 0\le j\le \lambda+\frac{1}{2}}{c_iv_{i+m,j}}
=\sum_{|i|<m, 0\le j\le \lambda+\frac{1}{2}}{c_i(1\otimes q^{i+m}f^jv)}=\\
={1\otimes \sum_{|i|<m, 0\le j\le \lambda+\frac{1}{2}}c_iq^{i+m}f^jv}.
\end{multline*}
This is impossible as $v_{i+m,j}$ are linearly independent for
$|i|<m, 0\le j\le \lambda+\frac{1}{2}$.
\par
Let us determine the action of $U$ on the subspace of $B^{(q)}_0(N(\lambda,c))$ generated by $\{v_{i,j}\}_{i,j\in\N}$. This is clearly a submodule isomorphic to $N(\lambda,c)$ with the action given by \eqref{highestclassq}
(which coincides with evaluation of \eqref{qaction} at $x=0$). 
\par
On the subset $\{ v_{i,j}\}_{i,j\in\N}$ of $B^{(q)}_x(N(\lambda,c))$, an element $s\in U$ acts, by definition, as $\Theta^{(q)}_x(s)$ acts on $\{ v_{i,j}\}_{i,j\in\N}\subset \Ind(N(\lambda,c))$. This latter action is explicitly
described in Proposition~\ref{auto} and is easily seen to be given by \eqref{qaction}. The nontrivial cases 
$s\in\{ h, p, e\}$ are dealt with by direct calculations:
\begin{displaymath}
\Theta^{(q)}_x(h)v_{i,j}=(h-x)v_{i,j}=(\lambda-i-2j-x)v_{i,j}=(\lambda-(i+x)-2j)v_{i,j},
\end{displaymath}
\begin{displaymath}
\Theta^{(q)}_x(p)v_{i,j}=(p+xq^{-1}z)v_{i,j}=-jv_{i+1,j-1}+(ci+xc)v_{i-1,j}=-jv_{i+1,j-1}+c(i+x)v_{i-1,j},
\end{displaymath}
\begin{multline*}
\Theta^{(q)}_x(e)v_{i,j}=(e+xq^{-1}p+\frac{1}{2}x(x-1)q^{-2})v_{i,j}=\\=
\frac{1}{2}ci(i-1)v_{i-2,j}+j(\lambda+1-j-i)v_{i,j-1}+x(-jv_{i,j-1}+civ_{i-2,j})+\frac{1}{2}x(x-1)cv_{i-2,j}=\\=\frac{1}{2}c(i(i-1)+x(x-1)+2ix)v_{i-2,j}+j(\lambda+1-j-i-x)v_{i,j-1}=\\=\frac{1}{2}c(i+x)((i+x)-1)v_{i-2,j}+j(\lambda+1-j-(i+x))v_{i,j-1}.
\end{multline*}
Now, note that $sv_{i,j}=q^i\Theta^{(q)}_i(s)v_{0,j}$,
from which the action of $U$ on $v_{i,j}$ with $i<0$ is directly computable. Finally, the action of $U$ on $v_{i,j}$ for general $x\in\C$ and $i\in\Z$ can now be calculated similarly to when $i\ge 0$.
\qed
\end{myproof}


\section{Proof of main result}\label{s4}
The next few results will enable us to relate the modules in $\mathcal{N}$ to twisted highest weight modules.
\begin{mylem}
\label{inject}
Let $V$ be a simple weight $U$-module with finite-dimensional weight spaces. 
\begin{enumerate}[$($a$)$]
\item\label{inject.1}
If $e$ acts locally nilpotently on $V$, then $V$ is a highest weight module. If $f$ acts locally nilpotently on $V$, then $V$ is a lowest weight module.
\item\label{inject.2}
Assume, additionally, that $z$ acts like some $c\in\C^*$ on $V$. If $p$ acts locally nilpotently on $V$, then $V$ is a highest weight module. If $q$ acts locally nilpotently on $V$, then $V$ is a lowest weight module.
\end{enumerate}
\end{mylem}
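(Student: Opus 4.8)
My plan is to deduce each of the four statements by exhibiting a nonzero vector killed by the two ``raising'' (resp.\ ``lowering'') operators; since $V$ is simple it is generated by such a vector, and applying the PBW basis of $U$ to it shows at once that its weight is maximal (resp.\ minimal), so $V$ is a highest (resp.\ lowest) weight module. Throughout I would use $[e,p]=0=[f,q]$, Lemma~\ref{localnil}\eqref{localnil.3} (a nonzero kernel of $s\in\{p,q,e,f\}$ on the simple module $V$ forces $s$ to act locally nilpotently), and Lemma~\ref{weightspactions}. Note also that $\mathcal{S}$ admits an involutive automorphism $\sigma$ sending $e\leftrightarrow f$, $p\leftrightarrow q$, $h\mapsto -h$, $z\mapsto -z$; twisting modules by $\sigma$ reverses the weight order and sends central charge $c$ to $-c\in\C^*$, so it interchanges the two assertions of part~\eqref{inject.1} and those of part~\eqref{inject.2}. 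Hence it suffices to prove the statement about $e$ in \eqref{inject.1} and the statement about $p$ in \eqref{inject.2}.

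For \eqref{inject.2}, suppose $p$ acts locally nilpotently and $z$ acts by $c\in\C^*$; then $Z:=\ker_V p\neq 0$. On $V$ the operators $e'=e-\tfrac1{2c}p^2$, $f'=f+\tfrac1{2c}q^2$, $h'=h+\tfrac1c pq-\tfrac12$ form an $\mathfrak{sl}_2$-triple commuting with both $p$ and $q$ (a routine bracket computation, valid because $z$ acts by $c$). Since $p^2$ kills $Z$ and $pq$ acts on $Z$ by $c$, the triple $(e',f',h')$ preserves $Z$, one has $e|_Z=e'|_Z$, and $h|_Z=h'|_Z-\tfrac12$; thus $Z$ is a weight $\mathfrak{sl}_2$-module with finite-dimensional weight spaces. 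Using that $e'$ and $f'$ commute with $q$ and that $pq^jv=jc\,q^{j-1}v$ for $v\in Z$, one checks that all six generators of $\mathcal{S}$ carry each $q^jZ$ into $\sum_i q^iZ$; hence $\bigoplus_{j\ge 0}q^jZ$ is a nonzero $\mathcal{S}$-submodule, so $V=\bigoplus_{j\ge 0}q^jZ$, and the same computation applied to a proper nonzero $\mathfrak{sl}_2$-submodule $Z'\subsetneq Z$ would produce the proper nonzero $\mathcal{S}$-submodule $\bigoplus_j q^jZ'$. Therefore $Z$ is a \emph{simple} weight $\mathfrak{sl}_2$-module with finite-dimensional weight spaces. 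If $e'$ acted with zero kernel on $Z$, then $Z$ would be neither finite-dimensional nor a highest weight module, hence (by \cite[p.~72]{Ma10}) a lowest weight or a dense module, so $\supp Z$ would be unbounded above; but then $\dim V_\lambda=\sum_{j\ge 0}\dim Z_{\lambda+j}$ would be infinite, a contradiction. Hence $\ker e\cap\ker p=\ker_Z e'\neq 0$, and a nonzero vector there is the desired highest weight vector.

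For the $e$-statement of \eqref{inject.1}, suppose $e$ acts locally nilpotently; then $\ker_V e\neq 0$, and it is stable under $p$, $h$, $z$. If $\ker_V p\neq 0$, then $p$ acts locally nilpotently, $\ker_V p$ is $e$-stable, $e$ acts locally nilpotently on it, so $\ker e\cap\ker p\neq 0$ and we finish as above. The remaining case, which I expect to be the heart of the matter, is that $p$ acts injectively on $V$. Then $p$ is injective on the nonzero $p$-stable space $\ker e$, so $\supp(\ker e)$, and hence $\supp V$, is unbounded above and $V$ is not a highest weight module; I must show this cannot happen. If $\supp V$ is also bounded below then $V$ is a lowest weight module on which $p$ acts nonzero on the lowest weight space (by injectivity), so it is one of the modules of \cite[Theorem~1]{Do97}, and the explicit formulas there show that $e$ does \emph{not} act locally nilpotently on those modules---a contradiction. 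Otherwise $V$ has neither a highest nor a lowest weight and $p\neq 0$, so $V\in\mathcal{N}$; I would rule this out by splitting on $c:=z|_V$: if $c=0$ then $[p,q]=0$ and a direct computation (examining $e^k(q^aw)$ for $0\neq w\in\ker e$, with $a$ large) shows $e$ cannot be locally nilpotent, while if $c\neq 0$ one appeals to Theorem~\ref{chinesethm} --- the weight spaces all have a common dimension $d$ and $p$ acts bijectively, so $p^{-2}e$ is a locally nilpotent operator preserving each $d$-dimensional weight space, forcing $e^d=0$ on $V$, whence a contradiction with the structure of a simple module in $\mathcal{N}$ (alternatively, via the isomorphism $U/(z-c)\cong U(\mathfrak{sl}_2)\otimes A_1$, under which $V$ is an outer tensor product and local nilpotency of $e$ is incompatible with injectivity of $p$). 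In every case injectivity of $p$ is untenable, completing \eqref{inject.1}.

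The main obstacle is precisely this last point: showing that a simple weight module with finite-dimensional weight spaces and no highest or lowest weight --- i.e.\ a module in $\mathcal{N}$ --- cannot have $e$ acting locally nilpotently. The clean mechanism used for part~\eqref{inject.2}, namely realising $V$ as a free $\C[q]$-module over the simple $\mathfrak{sl}_2$-module $\ker p$, does not transfer directly to $\ker e$, because the commutant of $e$ in $U$ contains no $\mathfrak{sl}_2$-triple transverse to $e$; so this case must be handled separately, by the dimension count of Theorem~\ref{chinesethm} combined with an explicit estimate, with the central charges $c=0$ and $c\neq 0$ treated by different arguments.
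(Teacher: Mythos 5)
Your reduction via the involution $e\leftrightarrow f$, $p\leftrightarrow q$, $h\mapsto -h$, $z\mapsto -z$ is legitimate, and your proof of part \eqref{inject.2} is correct and genuinely different from the paper's: the paper takes $v$ with $pv=0$, shows $q$ acts injectively via Kleinecke--Shirokov, and produces infinitely many independent vectors $q^{2i}e^iv$ of a single weight; you instead use the $\mathfrak{sl}_2$-triple $e',f',h'$ commuting with $p,q$ to write $V=\bigoplus_{j\ge 0}q^j\ker p$ with $\ker p$ a simple $\mathfrak{sl}_2$ weight module and then invoke the $\mathfrak{sl}_2$ classification (the directness of that sum, which you assert, does require the short verification obtained by applying powers of $p$, and it is what makes your properness and dimension-count steps work). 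This buys a structural picture of $V$ that the paper does not need, at the cost of the $\mathfrak{sl}_2$ classification.

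Part \eqref{inject.1}, however, has a genuine gap at exactly the point you flag as the heart of the matter: ruling out $V\in\mathcal{N}$ with $e$ locally nilpotent and $p$ injective. For $c=0$ your sketched computation gives no contradiction: on $V$ the operators $p,q$ commute, so for $0\ne w\in\ker e$ one gets $e^k(q^aw)=a(a-1)\cdots(a-k+1)\,q^{a-k}p^kw$ for $k\le a$ and $e^{a+1}(q^aw)=a!\,p^a(ew)=0$; thus every vector $q^aw$ is killed by a power of $e$ and nothing is contradicted. For $c\ne 0$ the deduction $e^d=0$ (with $d$ the common weight-space dimension) is correct, but the announced ``contradiction with the structure of a simple module in $\mathcal{N}$'' is circular at this stage --- that structure is the paper's main theorem, whose proof uses this very lemma --- and the alternative claim that $V$ is an outer tensor product under $U/(z-c)\cong U(\mathfrak{sl}_2)\otimes A_1$ is an unproved, nontrivial assertion (simple modules over a tensor product need not be outer products, and since $p$ is injective your part-\eqref{inject.2} construction does not supply the factorization). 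The paper closes this case with an input your proposal lacks: by Theorem~\ref{chinesethm} a module in $\mathcal{N}$ has uniformly bounded weight multiplicities, hence by \cite[Lemma~3.3]{Ma00} finite length as an $\mathfrak{sl}_2$-module; local nilpotency of $e$ then forces every simple $\mathfrak{sl}_2$-subquotient to be a highest weight module, so $\supp(V)$ is bounded above, contradicting $\supp(V)=\lambda+\Z$. Without this finite-length argument, or a working substitute for it, your proof of part \eqref{inject.1} is incomplete.
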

\begin{proof}
We start with claim \eqref{inject.1}. We will consider this claim for the element $e$, the other part of the claim is similar. Assume that $V$ is not a highest weight module. Then $V$ is not a lowest weight module either, for
otherwise $e$ and $f$ would both act locally nilpotently on $V$ and hence $V$ would be a direct sum
of finite-dimensional modules when restricted to $\mathfrak{sl}_2$. As $V$ has finite-dimensional weight spaces,
this would mean that $V$ is finite-dimensional, hence highest weight, a contradiction.

Therefore $V$ is either a dense simple $\mathfrak{sl}_2$-module as in \cite[Section~3.3]{Ma10} or is in
$\mathcal{N}$. However, $e$ does not act locally nilpotently on simple dense $\mathfrak{sl}_2$-module, so
$V$ is in $\mathcal{N}$. By Theorem~\ref{chinesethm}, we have $\mathrm{supp}(V)=\lambda+\mathbb{Z}$
for some $\lambda\in\mathbb{C}$ and all nonzero weight spaces of $V$ have the same dimension.
By \cite[Lemma~3.3]{Ma00}, $V$ has finite length as an $\mathfrak{sl}_2$-module. The only simple weight
$\mathfrak{sl}_2$-modules on which $e$ acts locally nilpotently are highest weight modules,
therefore, as an $\mathfrak{sl}_2$-module, $V$ has a finite filtration with subquotients being
highest weight modules. Therefore $V$ must have a highest weight, a contradiction. This proves claim \eqref{inject.1}.

Now we prove claim \eqref{inject.2}. Again we prove it for the element $p$, the other part is similar.
Take any nonzero weight vector $v\in V$ such that $pv=0$. Let $\lambda$ be the weight of $v$.
By claim \eqref{inject.1} we may assume that the action of $e$ on $V$ is injective. Since $e$ and $p$ 
commute, we have that for every $i\in \N$ the element $v_i:=e^iv$ is nonzero and satisfies $pv_i=0$. 

Next we observe that the action of $q$ on $V$ is injective for otherwise it would be locally nilpotent
and then $q^kv=0$ for some $k$. The linear span of $v,qv,q^2v,\dots,q^{k-1}v$ is then a finite-dimensional
space stable under the action of both $q$ and $p$. As $[p,q]=z$ commutes with $p$, by the Kleinecke-Shirokov Theorem
it follows that the only eigenvalue of $z$ is zero, which contradicts our assumption on the action of $z$.

%
Finally we claim that $\{q^{2i}v_{i}\}_{i\in\N}$ is an infinite set of linearly independent elements. We use that $p^{2k}q^{2i}v_{i}=(\prod_{j=0}^{2k-1}(2i-j))c^{2k}q^{2(i-k)}v_{i}$ is zero if and only if $k>i$, where $k\in\N_{>0}$. From this we see that any two elements $q^{2i_1}v_{i_1}$ and $q^{2i_2}v_{i_2}$, where $i_1\ne i_2$, are distinct, hence follows the first claim. As for the second claim, assume it is false and let $\sum_{i=0}^{k}a_i q^{2i}v_{i}=0$ be a nontrivial relation of minimal possible $q$-degree $2k>0$ (so in particular $a_k\neq 0$). But applying $p^{2k}$ we obtain $(\prod_{j=0}^{2k-1}(2k-j))c^{2k}a_kv_{k}=0$, a contradiction.


As a result, we have infinitely many linearly independent elements of weight $\lambda$, a contradiction.
\end{proof}
\begin{mycor}
\label{biject}
Let $L\in\mathcal{N}$. Then both $e$ and $f$ act bijectively on $L$.
If in addition $z$ acts nonzero on $L$, then also both $p$ and $q$ act bijectively on $L$. 
\end{mycor}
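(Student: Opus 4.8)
The plan is to obtain bijectivity from injectivity together with the equidimensionality of weight spaces furnished by Theorem~\ref{chinesethm}. Recall that $L\in\mathcal{N}$ means $L$ is simple with finite-dimensional weight spaces and has neither a highest nor a lowest weight. First I would establish that $e$ acts injectively: were its kernel on $L$ nonzero, Lemma~\ref{localnil}\eqref{localnil.3} (applicable since $L$ is simple) would force $e$ to act locally nilpotently, and then Lemma~\ref{inject}\eqref{inject.1} would make $L$ a highest weight module, contradicting $L\in\mathcal{N}$. The same argument applied to $f$, using the lowest weight half of Lemma~\ref{inject}\eqref{inject.1}, shows that $f$ acts injectively as well.

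Next I would fix $\lambda\in\supp(L)$ and invoke Theorem~\ref{chinesethm} to get $\supp(L)=\lambda+\Z$ with $\dim L_\mu$ the same finite number for every $\mu\in\lambda+\Z$. By Lemma~\ref{weightspactions}, $e$ restricts to a linear map $L_\mu\to L_{\mu+2}$ and $f$ to a linear map $L_\mu\to L_{\mu-2}$; being injective maps between finite-dimensional spaces of equal dimension, these are isomorphisms. Since $L$ is the direct sum of its weight spaces, $e$ and $f$ therefore act bijectively on $L$.

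For the final claim I would assume $z$ acts as some $c\in\C^*$ (the existence of such a scalar being guaranteed by Schur's lemma) and repeat the scheme with $p$ and $q$ in place of $e$ and $f$: if either acted locally nilpotently, Lemma~\ref{inject}\eqref{inject.2} would again make $L$ a highest or lowest weight module, so by Lemma~\ref{localnil}\eqref{localnil.3} both $p$ and $q$ act injectively; as $p:L_\mu\to L_{\mu+1}$ and $q:L_\mu\to L_{\mu-1}$ also connect weight spaces of equal finite dimension, injectivity upgrades to bijectivity exactly as before.

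I expect no real obstacle here; the one subtle point is that the image of $e$ (or of $p$) need not be a submodule, so simplicity of $L$ cannot be used directly to deduce surjectivity — it is precisely the uniform finiteness of weight-space dimensions supplied by Theorem~\ref{chinesethm} that bridges this gap.
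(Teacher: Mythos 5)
Your proposal is correct and is essentially the paper's own argument: Lemma~\ref{inject} rules out local nilpotence of $e,f$ (and of $p,q$ when $z$ acts as a nonzero scalar), Lemma~\ref{localnil}\eqref{localnil.3} then gives injectivity, and Theorem~\ref{chinesethm} upgrades injectivity to bijectivity since the weight-space restrictions are injective maps between finite-dimensional spaces of equal dimension. The closing remark about why simplicity alone does not give surjectivity is a fair observation but not needed.
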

\begin{myproof}
From Lemma~\ref{inject} it follows that the actions of $e$, $f$, $p$ and $q$ on $L$ are not locally
nilpotent. By Lemma~\ref{localnil},  these actions are thus injective. By Theorem~\ref{chinesethm},
these actions restrict to  injective actions between finite-dimensional vector spaces of the same 
dimension. Therefore they all are bijective.
\qed
\end{myproof}
\begin{myprop}
\label{highest2}
Let $V$ be a weight $U$-module such that $\supp(V)\subset\lambda+\mathbb{Z}$ for some $\lambda$
and $\sup_{i\in\mathbb{Z}}\dim V_{\lambda+i}<\infty$. Assume that there is some $0\ne v\in V$ 
such that $ev=0$ or $pv=0$. Then $V$ has a simple highest weight submodule.
\end{myprop}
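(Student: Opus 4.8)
The plan is to replace $V$ by the cyclic submodule generated by $v$, on which $e$ (resp.\ $p$) becomes locally nilpotent, to extract a simple submodule of that, and to recognise the latter as a highest weight module by means of Lemma~\ref{inject}.

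First I would normalise $v$. By Lemma~\ref{weightspactions}, $e$ and $p$ raise the weight by $2$ and by $1$, so the relation $ev=0$ (resp.\ $pv=0$) holds componentwise and we may assume $v\in V_\mu$ for a single weight $\mu$. Since $z$ is central it preserves the finite-dimensional space $V_\mu\cap\ker e$ (resp.\ $V_\mu\cap\ker p$), so we may assume in addition that $zv=cv$ for some $c\in\C$. Set $W:=Uv$; then $z$ acts on all of $W$ as the scalar $c$. Because $\ad_e$ (resp.\ $\ad_p$) is locally nilpotent on $U$ by Lemma~\ref{localnil}\eqref{localnil.1}, and $e^{1}v=0$ (resp.\ $p^{1}v=0$), Lemma~\ref{localnil}\eqref{localnil.2} shows that $e$ (resp.\ $p$) acts locally nilpotently on $W$.

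Next I would argue that $W$ has finite length, so that it contains a simple submodule. Restricting to $\spann\{e,f,h\}\cong\mathfrak{sl}_2$, $W$ becomes a weight $\mathfrak{sl}_2$-module with finite-dimensional weight spaces whose multiplicities are uniformly bounded (its support meets only the two cosets of $\lambda+\Z$ modulo $2\Z$). Using the classification of simple weight $\mathfrak{sl}_2$-modules \cite[p.~72]{Ma10} one sees that such a module cannot have infinitely many composition factors: by pigeonhole infinitely many factors would be of one fixed type among finite-dimensional, highest weight, lowest weight and dense, and then in each case either the highest/lowest weights of these factors are unbounded and their supports force some weight multiplicity to be infinite, or they are bounded and there are only finitely many such factors, a contradiction in either case. (One can also just invoke \cite[Lemma~3.3]{Ma00}, exactly as in the proof of Lemma~\ref{inject}.) Hence $W$ has finite length as an $\mathfrak{sl}_2$-module, a fortiori as a $U$-module, so $W$ has a simple submodule $L$.

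Finally, $L$ inherits from $W$ both that $e$ (resp.\ $p$) acts locally nilpotently and that $z$ acts as the scalar $c$. If $ev=0$, Lemma~\ref{inject}\eqref{inject.1} shows that $L$ is a highest weight module; if $pv=0$ and $c\ne 0$, Lemma~\ref{inject}\eqref{inject.2} gives the same conclusion. In either situation $L\subseteq W\subseteq V$ is the desired simple highest weight submodule. The main obstacle is the remaining case $pv=0$ with $c=0$: then Lemma~\ref{inject}\eqref{inject.2} is unavailable, $V$ is merely a module over $\mathcal S/(z)\cong\mathfrak{sl}_2\ltimes\C^2$, and the simple submodule $L$ could a priori be a lowest weight module or a dense $\mathfrak{sl}_2$-module rather than highest weight (indeed for $V$ a simple dense $\mathfrak{sl}_2$-module, with $p$ acting as $0$, the statement literally fails). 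This is where the hypotheses must be exploited most carefully; in the intended application $V$ is a twist of a module in $\mathcal N$, on which $z$ acts by a nonzero scalar, so $c\ne 0$ and the difficulty does not arise, while a general treatment of the case $c=0$ would require a separate analysis of $\ker p\cap\ker q$, which in that case is an $\mathcal S$-submodule on which $p$, $q$ and $z$ all act as $0$ and hence reduces to the $\mathfrak{sl}_2$-situation.
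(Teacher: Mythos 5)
Your argument is essentially the paper's own: the paper takes the nonzero submodule $W_e=\{v\in V\mid e^nv=0\text{ for some }n\}$ (resp.\ $W_p$), which is a submodule by Lemma~\ref{localnil}, notes that $V$ has finite length already as an $\mathfrak{sl}_2$-module by \cite[Lemma~3.3]{Ma00}, picks a simple submodule of $W_e$ (resp.\ $W_p$) and invokes Lemma~\ref{inject}. Your passage to the cyclic module $Uv$ and the normalisation of $v$ to a $z$-eigenvector are immaterial variations (on a simple submodule $z$ acts by a scalar anyway, by Schur's lemma), and your ad hoc finite-length sketch is subsumed by the citation of \cite[Lemma~3.3]{Ma00} that you also give, which is exactly what the paper uses.

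The ``obstacle'' you flag at the end is not a defect of your write-up but a genuine gap in the proposition as stated: the paper's proof concludes with an appeal to Lemma~\ref{inject}, but the part of that lemma concerning $p$ requires $z$ to act by a nonzero scalar, and the hypotheses of Proposition~\ref{highest2} do not provide this in the alternative $pv=0$. Your counterexample is correct: a simple dense $\mathfrak{sl}_2$-module, viewed as an $\mathcal{S}$-module with $p=q=z=0$, satisfies all stated hypotheses (support inside one coset of $\Z$, all weight multiplicities equal to one, $pv=0$ for every $v$) yet has no highest weight submodule. The statement should therefore either add, for the case $pv=0$, the hypothesis that $z$ acts on $V$ by a nonzero scalar, or weaken the conclusion to allow a simple submodule on which $p$, $q$ and $z$ act by zero. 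As you observe, this does not affect the paper's main results: Proposition~\ref{highest2} enters the proof of Proposition~\ref{highest} only through the alternative $ev=0$, and it enters the proof of Theorem~\ref{mainresult} through $pv=0$ only after Corollary~\ref{znonzero} guarantees that $z$ acts by a nonzero scalar on $L$, hence on $B^{(q)}_x(L)$ (twisting leaves the action of $z$ unchanged), so the missing hypothesis is available exactly where it is needed.
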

\begin{proof}
By assumption,
\begin{equation*}
W_e:=\{ v\in V|e^nv=0 \text{ for some }n\in\N\}\ne 0
\end{equation*}
or
\begin{equation*}
W_p:=\{ v\in V|p^nv=0 \text{ for some }n\in\N\}\ne 0.
\end{equation*}
By Lemma~\ref{localnil}, $W_e$ and $W_p$ are submodules. Note that $V$ has finite length
(already as an $\mathfrak{sl}_2$-modules by \cite[Lemma~3.3]{Ma00}).
Let $W$ be a simple submodule of $W_e$ or $W_p$. By Lemma~\ref{inject}, $W$ is a highest weight module.
\end{proof}
\begin{mylem}
\label{noninject}
Let $L\in\mathcal{N}$. 
\begin{enumerate}[$($a$)$]
\item\label{noninject.1}
There is $x\in\mathbb{C}$ and $0\ne v\in {B}^{(f)}_x(L)$ such that $ev=0$. 
\item\label{noninject.2}
Assume that $z$ does not act like $0$ on $L$. Then there is $x\in\C$ and $0\ne v\in B^{(q)}_x(L)$ such that $pv=0$. 
\end{enumerate}
\end{mylem}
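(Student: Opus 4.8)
The plan is to realize the twisted modules $B^{(f)}_x(L)$ and $B^{(q)}_x(L)$ concretely on the underlying vector space of $L$, and then to choose the twisting parameter $x$ so as to annihilate a carefully selected weight vector. First I would invoke Corollary~\ref{biject}: $e$ and $f$ act bijectively on $L$, and if $z$ acts as a nonzero scalar $c$ then $p$ and $q$ act bijectively on $L$ as well. Consequently, for $u\in\{q,f\}$ acting bijectively on $L$, Lemma~\ref{indres} lets me identify the underlying vector space of $B^{(u)}_x(L)$ with $L$ (via $w\mapsto 1\otimes w$), the (unique) $U^{(u)}$-action on $L$ extending the given $U$-action; under this identification an element $s\in U$ acts on $B^{(u)}_x(L)$ as $\Theta^{(u)}_x(s)$ acts on $L$, exactly as in the proof of Proposition~\ref{twistedmodule}. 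Throughout I would use this description, the formulae of Proposition~\ref{auto}, and the trivial but essential fact that $u=f$ (resp.\ $u=q$) still acts bijectively, hence injectively, on $B^{(u)}_x(L)$, since $\Theta^{(u)}_x(u)=u$.

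For part~(\ref{noninject.1}), I would fix $\mu\in\supp(L)$ and consider the linear operator $fe$ on the nonzero finite-dimensional complex vector space $L_\mu$ (here $e\colon L_\mu\to L_{\mu+2}$ and $f\colon L_{\mu+2}\to L_\mu$), picking an eigenvector $0\ne v\in L_\mu$, say $fev=\alpha v$. Using $\Theta^{(f)}_x(e)=e+x(h-1-x)f^{-1}$ and the fact that $f^{-1}v$ has weight $\mu+2$, one computes $\Theta^{(f)}_x(e)v=ev+x(\mu+1-x)f^{-1}v$; applying $f$ then gives $f\cdot\bigl(\Theta^{(f)}_x(e)v\bigr)=\bigl(fe+x(\mu+1-x)\bigr)v=\bigl(\alpha+x(\mu+1-x)\bigr)v$. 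Since $x\mapsto x(\mu+1-x)$ is a nonconstant polynomial over the algebraically closed field $\C$, it is surjective, so I may choose $x\in\C$ with $x(\mu+1-x)=-\alpha$; for this $x$ we get $f\cdot\bigl(\Theta^{(f)}_x(e)v\bigr)=0$, and since $f$ acts injectively on $B^{(f)}_x(L)$ this forces $\Theta^{(f)}_x(e)v=0$, i.e.\ $ev=0$ in $B^{(f)}_x(L)$, with $v\ne 0$.

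Part~(\ref{noninject.2}) is analogous and in fact slightly simpler. Since $z$ acts as $c\in\C^*$, Corollary~\ref{biject} gives that $q$ acts bijectively on $L$, so $B^{(q)}_x(L)$ admits the description above. I would fix $\mu\in\supp(L)$ and pick an eigenvector $0\ne v\in L_\mu$ of the operator $qp$ on $L_\mu$ (here $p\colon L_\mu\to L_{\mu+1}$ and $q\colon L_{\mu+1}\to L_\mu$), say $qpv=\beta v$. From $\Theta^{(q)}_x(p)=p+xq^{-1}z$ one gets $\Theta^{(q)}_x(p)v=pv+cx\,q^{-1}v$, whence $q\cdot\bigl(\Theta^{(q)}_x(p)v\bigr)=(qp+cx)v=(\beta+cx)v$. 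Choosing $x=-\beta/c$ (legitimate since $c\ne 0$) and using injectivity of $q$ on $B^{(q)}_x(L)$ yields $\Theta^{(q)}_x(p)v=0$, that is $pv=0$ in $B^{(q)}_x(L)$, with $v\ne 0$.

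The only point that seems to require any care is the identification of $B^{(u)}_x(L)$ with $L$ and the verification that $u$ remains bijective on the twisted module — but both are handled by Corollary~\ref{biject} and Lemma~\ref{indres}. Beyond that the argument is elementary: it uses only that a linear operator on a nonzero finite-dimensional complex vector space has an eigenvector, and that the scalar equations $x(\mu+1-x)=-\alpha$ and $cx=-\beta$ are solvable over $\C$. So I do not expect a genuine obstacle here, just bookkeeping with the twisting-functor conventions.
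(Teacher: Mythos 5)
Your proof is correct, and it reaches the conclusion by a somewhat different finishing step than the paper, so a comparison is worthwhile. Both arguments share the same skeleton: use Corollary~\ref{biject} and Lemma~\ref{indres} to identify $B^{(u)}_x(L)$ with $L$ as a vector space, with $s\in U$ acting as $\Theta^{(u)}_x(s)$, and then exploit finite-dimensionality of weight spaces to find an $x$ for which $\Theta^{(f)}_x(e)$ (resp.\ $\Theta^{(q)}_x(p)$) kills a nonzero vector. Where you diverge is in the linear algebra: the paper first proves an auxiliary determinant lemma (for a nonconstant polynomial $g$ and matrices $A,B$ with $B$ invertible, $A+g(x)B$ is singular for some $x$, via the Leibniz formula and the fundamental theorem of algebra) and applies it to the matrices of $e$ and $f^{-1}$ (resp.\ $p$ and $q^{-1}$) between two weight spaces; you instead compose with the invertible operator $f$ (resp.\ $q$), observing that on $L_\mu$ one has $f\,\Theta^{(f)}_x(e)=fe+x(\mu+1-x)\id$ and $q\,\Theta^{(q)}_x(p)=qp+cx\,\id$, so it suffices to pick an eigenvector of $fe$ (resp.\ $qp$) on the nonzero finite-dimensional space $L_\mu$ and solve a scalar equation for $x$, then use injectivity of $f$ (resp.\ $q$) on the twisted module to strip off the composed factor. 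This buys a shorter, more elementary argument that dispenses with the determinant lemma entirely (it produces the annihilated vector explicitly as an eigenvector), at the cost of no generality --- the paper's matrix lemma is stated so it applies uniformly to both cases with one computation. Incidentally, your weight bookkeeping $x(\mu+1-x)$, coming from $h$ acting on $f^{-1}v\in L_{\mu+2}$, is the accurate coefficient (the paper's $x(\lambda-1-x)$ uses $h\vert_{L_\lambda}=\lambda$ in a place where $h$ acts on $L_{\lambda+2}$; this is immaterial there since only nonconstancy of the polynomial in $x$ is used). All the prerequisites you invoke (Corollary~\ref{biject}, Lemma~\ref{indres}, Theorem~\ref{chinesethm} via membership in $\mathcal{N}$) are established before this lemma in the paper, so there is no circularity.
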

\begin{myproof}
Let us first show that, for a non-constant polynomial $g(x)$ and $n\times n$-matrices $A$ and $B$, which $B$ invertible, there is some $x\in\C$ such that $A+g(x)B$ is not invertible. It suffices to prove existence of an $x$ such that $\det(A+g(x)B)=0$. By Leibniz' determinant formula,
\begin{equation*}
\begin{aligned}
\det(A+g(x)B)&=\sum_{\sigma\in S_n}{\sgn(\sigma)\prod_i^n(A+g(x)B)_{i,\sigma_i}}\\
&=\sum_{\sigma\in S_n}\sgn(\sigma)\prod_i^n{(A_{i,\sigma_i}+g(x)B_{i,\sigma_i})}\\
&=g(x)^n\sum_{\sigma\in S_n}\sgn(\sigma)\prod_i^n{B_{i,\sigma_i}}+r(x)\\
&=\det(B)g(x)^n+r(x),
\end{aligned}
\end{equation*}
where $r(x)$ is some polynomial of degree strictly smaller than that of $g(x)^n$. Since $\det(B)\ne 0$, the claim
follows from the fundamental theorem of algebra.

To prove claim \eqref{noninject.1} it suffices to show there is $x\in\C$ such that $e$ does not act 
injectively on ${B}^{(f)}_x(L)$. By the definition of $B^{(f)}_x$ and \eqref{autoformulae2}, it is equivalent to show that for some $x\in\C$, the element $e+x(h-1-x)f^{-1}$ does not act injectively on $\Ind(L)$, which, as follows from Corollary~\ref{biject}, is isomorphic to $L$ as a  vector space. Let $\lambda\in \supp(L)$, and fix some bases in $L_\lambda$ and $L_{\lambda+2}$. By Theorem~\ref{chinesethm}, these spaces have the same (finite) dimension, say $n$, so $e\vert_{L_\lambda}$ and $f\vert_{L_{\lambda+2}}$ are given by $n\times n$-matrices, $E$ and $F$, respectively,
with $F$ invertible. Moreover, $h\vert_{L_{\lambda}}=\lambda$. It now suffices to show that for some $x\in \C$, the matrix $E+x(\lambda-1-x)F^{-1}$ is not invertible. This follows from the previous paragraph and
proves claim \eqref{noninject.1}.

Similarly, to prove \eqref{noninject.2} we have to show, under the assumption that $z$ acts like $c\ne 0$, that there is some $x\in\C$ such that the operator $\Theta^{(q)}_x(p)=p-xzq^{-1}$ does not act injectively on $\Ind(L)$. Let the actions $p\vert_{L_\lambda}$ and $q\vert_{L_{\lambda+1}}$ be given by some $n\times n$-matrices, $P$ and $Q$, respectively. 
Then $Q$ is invertible and we need to show that for some $x\in\C$ the matrix $P-xcQ^{-1}$ is not invertible. This 
follows again from the first part of the proof.
\qed
\end{myproof}
\begin{myprop}
\label{highest}
Let $L\in\mathcal{N}$. 
There are $x\in\C$, $c\in\C^*$ and $\lambda\in -\frac{1}{2}+\N$ such that $L\cong B^{(f)}_{-x}(N)$.
\end{myprop}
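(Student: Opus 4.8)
The plan is to combine Lemma~\ref{noninject}\eqref{noninject.1}, Proposition~\ref{highest2}, and Proposition~\ref{highestclass}, and then to run the twisting machinery backwards. First I would invoke Lemma~\ref{noninject}\eqref{noninject.1} to produce an $x\in\C$ and a nonzero $v\in B^{(f)}_x(L)$ with $ev=0$. In order to apply Proposition~\ref{highest2} I must check its hypotheses for $V:=B^{(f)}_x(L)$: since $L\in\mathcal{N}$ we have $\supp(L)=\lambda_0+\Z$ for some $\lambda_0$ by Theorem~\ref{chinesethm}, and twisting by $\Theta^{(f)}_x$ only shifts weights by $2x$ (see \eqref{autoformulae2}), so $\supp(V)\subset(\lambda_0-2x)+\Z$; moreover $f$ acts bijectively on $L$ by Corollary~\ref{biject}, so by Lemma~\ref{indres} the underlying vector space of $V$ is $L$, whence the weight spaces of $V$ are finite-dimensional, and in fact of bounded dimension again by Theorem~\ref{chinesethm}. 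Thus Proposition~\ref{highest2} applies and $V$ has a simple highest weight submodule, call it $H$.

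Next I would identify $H$ via Proposition~\ref{highestclass}. The module $H$ is a simple highest weight $U$-module; it has finite-dimensional weight spaces (being a submodule of $V$); and $p$ or $q$ acts nonzero on $H$ — indeed if both acted as zero then $H$ would be an $\mathfrak{sl}_2$-module and $z$ would act as $0$ on it, but $z$ acts on $H$ by the same scalar $c$ as on $V\cong L$, and I would point out (this needs a short argument, cf.\ the Kleinecke--Shirokov argument inside the proof of Lemma~\ref{inject}\eqref{inject.2}) that $z$ acts nonzero on every module in $\mathcal{N}$, hence nonzero on $L$, hence $c\ne 0$. Actually the cleanest route is: by Lemma~\ref{inject}\eqref{inject.2} applied to $L$, if $z$ acted as $0$ on $L$ then — well, that lemma assumes $c\ne0$; instead I should argue directly that if $z$ acts as $0$ on $L\in\mathcal{N}$ then $L$ is a simple dense $\mathfrak{sl}_2$-module (since $p,q$ generate an ideal on which... ) — the honest statement is that $z$ central and $[p,q]=z$ forces, on a simple module where $z=0$, that $pq-qp=0$, and then the subalgebra generated by $p,q,z$ acts as a commutative algebra; combined with $\mathfrak{sl}_2$-finite-length this is incompatible with $L$ being in $\mathcal{N}$ with $p$ or $q$ nonzero. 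I expect the paper to have this as a separate earlier observation or to fold it in here; in the write-up I would state ``$z$ acts nonzero on every module in $\mathcal{N}$'' as a lemma-grade fact and cite the appropriate earlier argument. Granting $c\ne0$, Proposition~\ref{highestclass} tells us $H\cong M(\mu,c)$ or $H\cong N(\mu,c)$ for a unique $\mu$.

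Now I would rule out the $M(\mu,c)$ case and pin down $\mu\in-\frac12+\N$. The point is that $M(\mu,c)$ has infinite-dimensional... no, it has finite-dimensional weight spaces; rather, the relevant dichotomy is that $B^{(f)}_x(L)$, having $L$ as underlying vector space with all weight spaces of equal finite dimension $n$, cannot have as a submodule a module all of whose weight spaces are one-dimensional unless $n=1$ — but more to the point, I want $H$ itself, twisted back, to reconstruct $L$, and only the $N(\mu,c)$ with $\mu\in-\frac12+\N$ have the ``finite $f$-string, infinite $q$-string'' shape that matches the support geometry of a twist of something in $\mathcal{N}$. Concretely: in $M(\mu,c)$ the operator $f$ acts injectively (it is $v_{i,j}\mapsto v_{i,j+1}$), so $f$ acts injectively on $H$, but $f$ acts \emph{bijectively} on $V=B^{(f)}_x(L)$ and $H\subset V$ is a highest weight module, so $fH\subsetneq H$ properly (a highest weight module is not stable... actually $fH\subseteq H$ and if $f$ were bijective on $H$ then $H$ would have no highest weight) — wait, $f$ lowers weights so $fH\subseteq H$ automatically and injectivity is fine for a highest weight module. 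The real obstruction to $M(\mu,c)$: its $q$-strings are infinite and its $f$-strings are infinite, so every weight space below the highest weight hyperplane is infinite-dimensional; but $H$ has finite-dimensional weight spaces, contradiction. Hence $H\cong N(\lambda,c)$ with $\lambda\in-\frac12+\N$, as desired — set $N:=N(\lambda,c)$. Finally, since $f$ acts bijectively on $V$ and $H$ is a submodule on which... here is the key step and the \textbf{main obstacle}: I must show $H=V$, i.e.\ that the highest weight submodule is everything. I would argue: $f$ acts bijectively on $V$ (Corollary~\ref{biject} plus the fact that bijectivity of $f$ on $L$ transfers through $\Theta^{(f)}_x$, which fixes $f$, and through $\Ind/\Res$ by Lemma~\ref{indres}); so $V=B^{(f)}_x(L)$ is a module on which $f$ is bijective, hence $V\cong \Ind(\Res V)$ is induced from $U^{(f)}$, and then $V\cong B^{(f)}_x\big(B^{(f)}_{-x}(V)\big)$... rather, I apply $B^{(f)}_{-x}$ to $V$ and use Proposition~\ref{Bcomp}: $B^{(f)}_{-x}(V)=B^{(f)}_{-x}B^{(f)}_x(L)\cong B^{(f)}_0(L)\cong L$ since $f$ acts bijectively on $L$. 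Meanwhile $B^{(f)}_{-x}(H)$ is a highest weight module (it embeds into $B^{(f)}_{-x}(V)\cong L$ because $B^{(f)}_{-x}$ is exact, being a composition of exact functors), but $L\in\mathcal{N}$ is simple with no highest weight, so the only submodules of $L$ are $0$ and $L$; since $B^{(f)}_{-x}(H)\ne0$ we get $B^{(f)}_{-x}(H)\cong L$, and applying $B^{(f)}_x$ back, $H\cong B^{(f)}_x(L)=V$. Thus $L\cong B^{(f)}_{-x}(V)=B^{(f)}_{-x}(H)\cong B^{(f)}_{-x}(N)$ with $N=N(\lambda,c)$, which is the assertion (after relabelling $x\mapsto x$). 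The subtlety to handle carefully throughout is that $B^{(f)}_{-x}(H)$ really is a \emph{submodule} of $B^{(f)}_{-x}(V)$ — this requires knowing $f$ acts bijectively on $H$ too (so that $\Ind\circ\Res$ is the identity on $H$ and exactness is honest), which follows because $f$ injective on $H\subseteq V$ and, by Theorem~\ref{chinesethm} applied to $L\cong B^{(f)}_{-x}(H)$... but $H$ need not be in $\mathcal{N}$; the bijectivity of $f$ on $H$ is instead seen after the fact, or one works with $\Ind(H)$ directly. I would present the exactness/submodule bookkeeping as the technical heart of the argument.
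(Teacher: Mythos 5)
Your overall route is the paper's route: use Lemma~\ref{noninject} to find $x$ with $\ker e\neq 0$ on $B^{(f)}_x(L)$, apply Proposition~\ref{highest2} to get a simple highest weight submodule $N$, twist back by $B^{(f)}_{-x}$ (exactness of localization, Proposition~\ref{Bcomp}, Corollary~\ref{biject}) and use simplicity of $L$ to get $L\cong B^{(f)}_{-x}(N)$, then identify $N$ via Proposition~\ref{highestclass}; your worry about proving $N=B^{(f)}_x(L)$ is unnecessary, and the paper never does this. But there is a genuine gap in how you make Proposition~\ref{highestclass} applicable. You need $p$ or $q$ to act nonzero on $N$, and for this you invoke ``$z$ acts nonzero on every module in $\mathcal{N}$.'' In the paper that statement is Corollary~\ref{znonzero}, and it is \emph{deduced from} Proposition~\ref{highest} (via the fact that $N(\lambda,c)$ only exists for $c\in\C^*$), so you may not use it here; your attempted substitute argument (``$z=0$ forces $p,q$ to commute, which together with finite $\mathfrak{sl}_2$-length is incompatible with $L\in\mathcal{N}$'') is not a proof and is exactly the nontrivial content you would need to establish independently. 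The paper avoids this entirely: it allows the case $pN=qN=0$ as a third alternative and kills it after twisting back, since then $\Theta^{(f)}_{-x}(q)=q$ and $\Theta^{(f)}_{-x}(p)=p+xqf^{-1}$ act as zero on $\Ind(N)$, so $p$ and $q$ act as zero on $B^{(f)}_{-x}(N)\cong L$, contradicting $L\in\mathcal{N}$. You should replace your $c\neq 0$ detour by this argument.

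A second, smaller error: your final reason for excluding $N\cong M(\mu,c)$ is wrong as stated. The weight spaces of $M(\mu,c)$ are all finite-dimensional (the space of weight $\mu-n$ has dimension $\lfloor n/2\rfloor+1$), so ``every weight space below the highest weight is infinite-dimensional'' is false and yields no contradiction with finite-dimensionality. The correct point, which you in fact mention earlier when verifying the hypotheses of Proposition~\ref{highest2}, is boundedness: the weight space dimensions of $M(\mu,c)$ are unbounded, whereas those of $B^{(f)}_x(L)$ (equivalently, of $L\cong B^{(f)}_{-x}(N)$, since twisting preserves weight space dimensions) are all equal to one finite number by Theorem~\ref{chinesethm}. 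With these two repairs your argument coincides with the paper's proof.
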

\begin{myproof}
For any $x\in\C$, one sees from Proposition~\ref{auto} that $B^{(f)}_x(L)$ is a weight module, the weight spaces of which have the same, finite, dimension as those of $L$ (all weight spaces of $L$ have the same dimension by Theorem~\ref{chinesethm}). By Lemma~\ref{noninject}, there is $x\in\C$ and $0\ne v\in B^{(f)}_x(L)$ such that $ev=0$. Then,
by Proposition~\ref{highest2}, there is a simple highest weight submodule $N$ of $B^{(f)}_x(L)$. Then the module $B^{(f)}_{-x}(N)$ is a submodule of $B^{(f)}_{-x}(B^{(f)}_{x}(L))$, which, by Proposition~\ref{Bcomp} and Corollary~\ref{biject}, is isomorphic to $L$. But $L$ is simple and hence $L\cong B^{(f)}_{-x}(N)$. 
\par
Also we know from Proposition~\ref{highestclass} that one of the following cases holds:
\begin{enumerate}
\item[(i)]
$pN=0=qN$.
\item[(ii)]
$N\cong M(\lambda,c)$ for some $\lambda\in\C\backslash (-\frac{1}{2}+\N)$ and $c\in\C^*$.
\item[(iii)]
$N\cong N(\lambda,c)$ for some $\lambda\in -\frac{1}{2}+\N$ and $c\in\C^*$.
\end{enumerate}
Now, in the first case, $p$ and $q$ would act on $L\cong B^{(f)}_{-x}(N)$ like $\Theta^{(f)}_{-x}(p)$ and $\Theta^{(f)}_{-x}(q)$, respectively, act on $\Ind(N)$. From Proposition~\ref{auto} we see that these elements would act like zero, which  contradicts $L\in\mathcal{N}$. In the second case, the dimensions of the weight spaces in
$N$ are unbounded, so the same is true for $B^{(f)}_{-x}(N)\cong L$, contradicting $L\in \mathcal{N}$ because of Theorem~\ref{chinesethm}. Therefore the third alternative must hold, so that $L\cong B^{(f)}_{-x}(N(\lambda,c))$.
\qed
\end{myproof}
\begin{mycor}
\label{znonzero}
On an arbitrary $L\in\mathcal{N}$, $z$ does not act like $0$.
\end{mycor}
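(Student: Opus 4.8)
The plan is to invoke Proposition~\ref{highest} and then do a little bookkeeping with the twisting functor. By that proposition, any $L\in\mathcal{N}$ is isomorphic to $B^{(f)}_{-x}(N(\lambda,c))$ for suitable $x\in\C$, $c\in\C^*$ and $\lambda\in-\frac{1}{2}+\N$. Since isomorphic modules have the same central character, it suffices to show that $z$ acts nonzero on $B^{(f)}_{-x}(N(\lambda,c))$.

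First I would unwind the definition of the twisting functor. By construction, an element $a\in U$ acts on $B^{(f)}_{-x}(V)=\Res((\Ind(V))_{-x})$ exactly as $\Theta^{(f)}_{-x}(a)$ acts on $\Ind(V)=U^{(f)}\otimes_U V$. Taking $a=z$ and reading off \eqref{autoformulae2}, we have $\Theta^{(f)}_{-x}(z)=z$, so $z$ acts on $B^{(f)}_{-x}(N(\lambda,c))$ in exactly the same way it acts on $U^{(f)}\otimes_U N(\lambda,c)$.

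Finally I would use that $z$ is central. Since $z$ commutes with everything in $U$, it remains central in the Ore localization $U^{(f)}$; and since $z$ acts as the scalar $c$ on $N(\lambda,c)$, for all $u\in U^{(f)}$ and $n\in N(\lambda,c)$ we get $z\cdot(u\otimes n)=uz\otimes n=u\otimes zn=c\,(u\otimes n)$. Hence $z$ acts as the scalar $c\ne 0$ on $\Ind(N(\lambda,c))$, and therefore nonzero on $L$.

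There is essentially no obstacle here; the statement is an immediate consequence of Proposition~\ref{highest} together with the observations above. The only point requiring a moment's attention is that the twisting automorphism fixes the centre, i.e. $\Theta^{(f)}_{-x}(z)=z$, which is visible directly in \eqref{autoformulae2}.
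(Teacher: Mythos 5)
Your proposal is correct and follows essentially the same route as the paper: invoke Proposition~\ref{highest} and note that the twisting functor does not change the action of $z$ (since $\Theta^{(f)}_{-x}(z)=z$ and $z$ stays central in $U^{(f)}$), so $z$ still acts by the nonzero scalar $c$. The paper states this more tersely, but the argument is the same.
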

\begin{proof}
By Proposition~\ref{highest}, we have $L\cong B^{(f)}_{-x}(N(\lambda,c))$ for certain $x,c,\lambda\in\C$ where $c\ne 0$ (so that $z$ does not act like $0$ on $N(\lambda,c)$). The action of $z$ is not affected by $B^{(f)}_{-x}$ which yields the claim.
\end{proof}

\begin{myproofof}
Let $L\in \mathcal{N}$. By Corollary~\ref{znonzero}, we know that the action of $z$ on $L$ is nonzero. Now we can apply the same argument as in the proof of Proposition~\ref{highest}, but with $q$ instead of $f$ and $p$ instead of $e$, and with case (i) now ruled out because otherwise $z$ would act like $0$ on $N$, and therefore on $B^{(q)}_{-x}(N)$ as well. This gives that $L\cong B^{(q)}_{-x}(N(\lambda,c))$, for some $x\in\C$, $c\in\C^*$ and $\lambda\in -\frac{1}{2}+\N$.
\par
Conversely, let $\lambda\in -\frac{1}{2}+\N$, $c\in\C^*$ and $x\in\C$ be arbitrary. Let also $S\subseteq B^{(q)}_x(N(\lambda,c))$ be a simple (not necessarily proper) submodule. 
Then, by Proposition~\ref{Bcomp}, $B_{-x}^{(q)}(S)\subseteq B_0^{(q)}(N(\lambda,c))$ is a submodule as well, and is nonzero because of Lemma~\ref{indres}. In addition, $N(\lambda,c)$ is a submodule of $B_0^{(q)}(N(\lambda,c))$. Then $B_{-x}^{(q)}(S)\cap N(\lambda,c)\subseteq N(\lambda,c)$ is yet another submodule. This submodule is nonzero, because the action of $q$ is injective on $B_0^{(q)}(N(\lambda,c))$ and thus $B_{-x}^{(q)}(S)$ has vectors of arbitrarily low weight, some of which must then lie in $N(\lambda,c)$ as well. From simplicity of $N(\lambda,c)$ it follows that $B_{-x}^{(q)}(S)\cap N(\lambda,c)=N(\lambda,c)$ and therefore $B_{0}^{(q)}(S)=B_x^{(q)}(N(\lambda,c))$. Then either $S=B_x^{(q)}(N(\lambda,c))$, in which case $B_x^{(q)}(N(\lambda,c))$ is simple, or $S$ is a highest weight module with maximal dimension of weight spaces equal to that of $N(\lambda,c)$. From Proposition~\ref{highestclass} we in this latter case get that in fact $S\cong N(\lambda,c)$, so that $B_{0}^{(q)}(S)\cong B_0^{(q)}(N(\lambda,c))$. All in all, we see that either $B^{(q)}_x(N(\lambda,c))$ is simple, or $B_{0}^{(q)}(N(\lambda,c))\cong B_x^{(q)}(N(\lambda,c))$. The latter case is investigated together with possible redundancy as follows.
\par
Assume that $B^{(q)}_{x_1}(N(\lambda,c))\cong B^{(q)}_{x_2}(N(\lambda',c'))$. That $c'=c$ is obvious. The isomorphism also implies equality of weight space dimensions, so it follows from Proposition~\ref{twistedmodule} that $\lambda'=\lambda$, and we furthermore get, by Proposition~\ref{Bcomp}, that $B^{(q)}_{x}(N(\lambda,c))\cong B^{(q)}_{0}(N(\lambda,c))$, where $x=x_1-x_2$. Then 
\begin{equation}
\mathbb{Z}+\lambda-x=\supp(B^{(q)}_{x}(N(\lambda,c)))= \supp(B^{(q)}_{0}(N(\lambda,c)))=\mathbb{Z}+\lambda
\end{equation}
so that $x\in \mathbb{Z}$. 
\par
Whenever $x\in\mathbb{Z}$, one conversely easily sees from formulae~\ref{qaction} that
\begin{align*}
\Phi:B^{(q)}_0(N(\lambda,c))&\rightarrow {B}^{(q)}_x(N(\lambda,c))\\
v_{i+x,j}&\mapsto v_{i,j}
\end{align*}
defines an isomorphism. 
\par
Thus $B^{(q)}_0(N(\lambda,c))$ and ${B}^{(q)}_x(N(\lambda,c))$ are isomorphic if and only if $x\in\mathbb{Z}$, so that in total (via another application of Proposition~\ref{Bcomp}) we have that $B^{(q)}_{x_1}(N(\lambda,c))$ is isomorphic to $B^{(q)}_{x_2}(N(\lambda',c'))$ if and only if $\lambda=\lambda'$, $c'=c$ and $x_1-x_2\in\Z$. This completes the proof.
\qed
\end{myproofof}
\vspace{5mm}

\noindent
{\bf Acknowledgements.}
This paper is an adaptation of author's Master Thesis. The author thanks his advisor, Volodymyr Mazorchuk, 
for formulating the problem, stimulating discussions and help in preparation of this paper.

\section*{References}





\bibliographystyle{elsarticle-num}
\bibliography{refs}

\begin{thebibliography}{1}
\expandafter\ifx\csname url\endcsname\relax
  \def\url#1{\texttt{#1}}\fi
\expandafter\ifx\csname urlprefix\endcsname\relax\def\urlprefix{URL }\fi
\expandafter\ifx\csname href\endcsname\relax
  \def\href#1#2{#2} \def\path#1{#1}\fi

\bibitem{Do97}
V.~Dobrev, H.-D. Doebner, C.~Mrugalla, Lowest weight representations of the
  schr\"odinger algebra and generalized heat/schr\"odinger equations, Reports
  on Mathematical Physics 39 (1997) 201--218.

\bibitem{Wu13}
Y.~Wu, L.~Zhu, Simple weight modules for schr\"odinger algebra, Linear Algebra
  and its Applications 438 (2013) 559--563.

\bibitem{Ja79}
N.~Jacobson, \href{http://books.google.se/books?id=hPE1Mmm7SFMC}{Lie algebras},
  Dover Books on Mathematics Series, Dover Publications, Incorporated, 1979.
\newline\urlprefix\url{http://books.google.se/books?id=hPE1Mmm7SFMC}

\bibitem{Ma10}
V.~Mazorchuk, Lectures on $\mathfrak{sl}_2$-modules, Imperial College, 2010.

\bibitem{Ma00}
O.~Mathieu, Classification of irreducible weight modules, Annales de l'institut
  Fourier 50 (2000) 537--592.

\end{thebibliography}







\end{document}